\newtheorem{theorem}{Theorem}[section]
\newtheorem{proposition}[theorem]{Proposition}
\newtheorem{lemma}[theorem]{Lemma}
\newtheorem{corollary}[theorem]{Corollary}
\newtheorem{remark}{Remark}[section]
\newtheorem{definition}{Definition}[section]
\newtheorem{maintheorem}{Theorem}
\newtheorem{claim}{Claim}[section]
\newcommand{\blanksquare}{\,\,\,$\sqcup\!\!\!\!\sqcap$}
\newcounter{example}
{\par}
\begin{document}

\title[On the stability of the set of hyperbolic closed orbits]{\textbf{On the stability of the set of hyperbolic closed orbits of a Hamiltonian}}

\author[M. Bessa]{M\'{a}rio Bessa}
\address{Departamento de Matem\'atica Pura, Universidade do Porto, 
Rua do Campo Alegre, 687, 
4169-007 Porto, Portugal \\ ESTGOH-Instituto Polit\'ecnico de Coimbra, Rua General Santos Costa, 3400-124 Oliveira do Hospital, Portugal}
\email{bessa@fc.up.pt}
\author[C. Ferreira]{C\'elia Ferreira}
\address{Departamento de Matem\'atica Pura, Universidade do Porto, 
Rua do Campo Alegre, 687, 
4169-007 Porto, Portugal}
\email{celiam@fc.up.pt}
\author[J. Rocha]{Jorge Rocha}
\address{Departamento de Matem\'atica Pura, Universidade do Porto, 
Rua do Campo Alegre, 687, 
4169-007 Porto, Portugal}
\email{jrocha@fc.up.pt}

\begin{abstract}

A Hamiltonian level, say a pair $(H,e)$ of a Ha\-mil\-to\-nian $H$ and an energy $e \in \mathbb{R}$, is said to be Anosov if there exists a connected component $\mathcal{E}_{H,e}$ of $H^{-1}(\left\{e\right\})$ which is uniformly hyperbolic for the Hamiltonian flow $X_H^t$. The pair $(H,e)$ is said to be a Hamiltonian star system if there exists a connected component $\mathcal{E}^\star_{H,e}$ of the energy level $H^{-1}(\left\{{e}\right\})$ such that all the closed orbits and all the critical points of $\mathcal{E}^\star_{H,e}$ are hyperbolic, and the same holds for a connected component of the energy level $\tilde{H}^{-1}(\left\{\tilde{e}\right\})$, close to $\mathcal{E}^\star_{H,e}$, for any Hamiltonian $\tilde{H}$, in some $C^2$-neighbourhood of $H$, and $\tilde{e}$ in some neighbourhood of $e$.

In this article we prove that for any four-dimensional Ha\-mil\-to\-nian star level $(H,e)$ if the surface $\mathcal{E}^\star_{H,e}$ does not contain critical points, then $X_H^t|_{\mathcal{E}^\star_{H,e}}$ is Anosov; if $\mathcal{E}^\star_{H,e}$ has critical points, then there exists $\tilde{e}$, arbitrarily close to $e$, such that $X_H^t|_{\mathcal{E}^\star_{H,\tilde{e}}}$ is Anosov.
\end{abstract}

\maketitle

\bigskip
\textbf{Keywords:} Hamiltonian vector field, Anosov flow, Dominated splitting, Lyapunov exponent.


\begin{section}{Introduction}



Let $S$ be a dynamical system defined in a closed manifold. Roughly speaking $C^r$-structural stability ($r\geq 1$) of a dynamical system means that there exists a $C^r$-neighbourhood $\mathcal{U}$ of $S$ such that any other system in $\mathcal{U}$ is topological conjugated to $S$.  These conjugations are defined in  sets where the dynamics is relevant, usually in its nonwandering set, $\Omega(S)$. We recall that $\Omega(S)$ is the set of points in the manifold such that, for every neighbourhood $U$, there exists an iterate $n$ sa\-tis\-fying $S^n(U)\cap U\not=\emptyset$.

The notion of structural stability was first introduced in the mid 1930' by Andronov and Pontrjagin (\cite{AP}) and this concept is intrinsically related to uniform hyperbolicity (see Section~\ref{hyper} for the definition of hyperbolicity).

We say that $S$ satisfy the \emph{Axiom A} if the closure of its closed orbits  is equal to $\Omega(S)$ and, moreover, this set is hyperbolic. One of the most challenging problems in the modern theory of dynamical systems is to know if a $C^r$-structural stable system satisfy the Axiom A property. A cornerstone to this program was the remarkable proof done by Ma\~n\'e of the stability conjecture for the case of $C^1$-dissipative diffeomorphisms (\cite{Ma1}).

Back to the early 1980', Ma\~n\'e defined a set $\mathcal{F}^1$, of dissipative diffeomorphisms having a $C^1$-neighbourhood $\mathcal{U}$ such that every diffeomorphism inside $\mathcal{U}$ has all periodic orbits of hyperbolic type. In ~\cite{Ma2}, Ma\~n\'e proved that every surface dissipative diffeomorphism of $\mathcal{F}^1$ satisfies the Axiom A. Hayashi (\cite{Hay}) extended this result for higher dimensions. The set $\mathcal{F}^1$ is related to structural stability since the proof that $C^1$-structural stable system satisfies the Axiom A property mainly uses the fact that the system is in $\mathcal{F}^1$.

Recall that, by the spectral decomposition of an Axiom A system $S$, we have that $\Omega(S)=\cup_{i=1}^k \Lambda_i$ where each $\Lambda_i$ is a basic piece. We define an order relation by $\Lambda_i\prec \Lambda_j$ if there exists $x$ (outside $\Lambda_i\cup\Lambda_j$) such that $\alpha(x)\subset \Lambda_i$ and $\omega(x)\subset\Lambda_j$. We say that $S$ has a \emph{cycle} if there exists a cycle with respect to $\prec$ (see ~\cite{Shub} for details).

In fact, the mentioned results by Ma\~n\'e and Hayashi guarantee that diffeomorphisms in $\mathcal{F}^1$ satisfy the Axiom A and the no-cycle properties (see also a result by Aoki~\cite{Aoki}). We point out that classic results imply that being in $\mathcal{F}^1$ is a necessary condition to satisfy the Axiom A and the no-cycle condition (see~\cite{Ma1} and the references wherein).

For the continuous-time case the analogous to the set $\mathcal{F}^1$ is traditionally denoted by $\mathcal{G}^1$ and, a flow in it, is called a star flow. Obviously, in this setting, the hyperbolicity of the flow equilibria (singularities of the vector field) is also imposed.

It is well known that the dissipative star flow defined by the Lorenz differential equations (see e.g. ~\cite{WT})  belongs to $\mathcal{G}^1$. However, the hyperbolic saddle-type singularity is accumulated by (hyperbolic) closed orbits and they are contained in the nonwandering set preventing the flow to be Axiom A. Due to the technical difficulties presented in the flow setting, the problem of knowing if every (nonsingular) dissipative star flow satisfies the Axiom A and the no-cycle condition remains unsolved for almost 20 years. This central result was proved by Gan and Wen (\cite{GanWen}).

If we consider flows that are divergence-free and define $\mathcal{G}^1_{div=0}$, which means that the star property is satisfied when one restricts to the conservative setting (but possibly not in the broader space of dissipative flows), using a completely different approach, based in conservative-type seminal ideas of Ma\~n\'e, two of the authors (see~\cite{MBJR}) proved recently that any divergence-free star vector field defined in a closed three-dimensional manifold does not have singularities and moreover it is Anosov (the manifold is uniformly hyperbolic).

In this paper we follow the strategy described in \cite{MBJR}, in order to study the setting of Hamiltonian flows defined on a four-dimensional compact symplectic manifold $(M,\omega)$. For that, we use specific tools and several recent results on conservative three-dimensional flows and on Hamiltonian flows. It is worth pointing out that part of the difficulty of our problem consists in transposing in a proper way concepts from the conservative flow setting to the Hamiltonian one.

To state our main result let  us first recall that a critical point of $H$ is a singularity of the associated vector field. Let $\mathcal{G}^2(M)$ denote the set of Hamiltonian star systems, define in a similar way of the previous ones (we refer the reader to Definition~\ref{hss}), and denote by $\mathcal{E}^\star_{H,e}$ the connected component of the energy level set $H^{-1}(\{e\})$ associated to the star property.
We prove the following.

\begin{maintheorem}\label{mainth}
If $(H,e)\in\mathcal{G}$$^{2}(M)$ and Crit $(H|_{\mathcal{E}^\star_{H,e}})=\emptyset$ then $X_H^t|_{\mathcal{E}^\star_{H,e}}$ is Anosov; if  $\mathcal{E}^\star_{H,e}$ has critical points then there exists $\tilde{e}$, arbitrarily close to $e$, such that $X_H^t|_{\mathcal{E}^\star_{H,\tilde{e}}}$ is Anosov.
\end{maintheorem}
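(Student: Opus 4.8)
The plan is to prove this by leveraging the established theory for divergence-free star flows on three-dimensional manifolds, following the strategy of \cite{MBJR}, and transposing it to the Hamiltonian setting. The key observation is that a Hamiltonian flow restricted to a regular energy level $\mathcal{E}^\star_{H,e}$ (one containing no critical points) is a divergence-free flow on a three-dimensional manifold: the symplectic volume induces an invariant volume form on the energy surface, and the restricted vector field is nonsingular precisely because $e$ is a regular value. Thus the first step is to verify carefully that the Hamiltonian star property on $(M,\omega)$, as formalized in Definition~\ref{hss}, translates into the three-dimensional divergence-free star property $\mathcal{G}^1_{div=0}$ for the restricted flow $X_H^t|_{\mathcal{E}^\star_{H,e}}$. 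This requires that $C^2$-perturbations of $H$ together with energy perturbations $\tilde{e}$ near $e$ produce, on the nearby energy surface, exactly the class of three-dimensional divergence-free perturbations covered by the main result of \cite{MBJR}. The subtlety here is that not every divergence-free perturbation of the restricted flow need arise from a Hamiltonian perturbation; one must check that the Hamiltonian perturbation toolbox is rich enough to realize the perturbations needed in the three-dimensional argument, which is precisely the transposition difficulty the introduction flags.

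For the first case, when $\text{Crit}(H|_{\mathcal{E}^\star_{H,e}})=\emptyset$, once the reduction to a three-dimensional divergence-free star flow is in place, I would invoke the result of \cite{MBJR} directly: any divergence-free star vector field on a closed three-manifold has no singularities and is Anosov. Since $\mathcal{E}^\star_{H,e}$ is the connected component carrying the star property and is assumed free of critical points, the restricted flow falls exactly under that theorem, yielding that $X_H^t|_{\mathcal{E}^\star_{H,e}}$ is Anosov. The main work in this case is foundational rather than dynamical: establishing that $\mathcal{E}^\star_{H,e}$ is a genuine closed three-dimensional manifold (compactness of the component, regularity of $e$ on it) and that the star hypothesis descends correctly.

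For the second case, when $\mathcal{E}^\star_{H,e}$ contains critical points, the plan is to perturb the energy rather than the Hamiltonian. A critical point of $H$ on the energy level corresponds to a singularity of $X_H$, and at such a point the energy value is critical; by Sard's theorem and the local structure of $H$ near its critical points, regular values $\tilde{e}$ are dense arbitrarily close to $e$. For such $\tilde{e}$, the relevant connected component $\mathcal{E}^\star_{H,\tilde{e}}$ no longer meets the critical set and so is a regular, hence nonsingular, three-dimensional energy surface. The key point is that the star property is an open and robust condition carried over to nearby energy levels by the very definition of $\mathcal{G}^2(M)$, so $(H,\tilde{e})$ remains a Hamiltonian star level with a regular surface, and the first case applies to give that $X_H^t|_{\mathcal{E}^\star_{H,\tilde{e}}}$ is Anosov.

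The hard part, I expect, will be the transposition in the first paragraph: ensuring that the Hamiltonian framework supplies perturbations matching those demanded by the three-dimensional divergence-free arguments of \cite{MBJR}, particularly the perturbation lemmas used to force hyperbolicity out of the star condition (analogues of Franks' lemma and the linear cocycle dichotomies controlling Lyapunov exponents and dominated splittings). One must confirm that Hamiltonian suspensions and local $C^2$-perturbations of $H$ act on the linearized flow along closed orbits with enough freedom to reproduce the conservative perturbation machinery, and that the dominated splitting and Lyapunov-exponent estimates obtained on the restricted flow genuinely come from admissible Hamiltonian perturbations; this is where the conceptual content of adapting \cite{MBJR} to the Hamiltonian category concentrates.
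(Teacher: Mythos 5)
There is a genuine gap, and it sits exactly at the step you yourself flag as ``foundational'': the reduction of the first case to the theorem of \cite{MBJR} does not go through. The star hypothesis of \cite{MBJR} quantifies over \emph{all} $C^1$-small divergence-free perturbations of the three-dimensional vector field, while the Hamiltonian star property of Definition~\ref{hss} quantifies only over perturbations arising from nearby Hamiltonians $\tilde H$ and nearby energies $\tilde e$. Since a star property is a robustness hypothesis, shrinking the perturbation class makes the hypothesis \emph{weaker}, so the Hamiltonian star property does not imply that $X_H^t|_{\mathcal{E}^\star_{H,e}}$ belongs to $\mathcal{G}^1_{div=0}$: a divergence-free perturbation of the restricted field could create a non-hyperbolic closed orbit without contradicting $(H,e)\in\mathcal{G}^2(M)$, because that perturbation need not be realizable as a Hamiltonian one. (Your worry is stated in the wrong direction: richness of the Hamiltonian toolbox would let you \emph{transport} perturbation arguments, but it cannot let you \emph{verify the hypothesis} of the three-dimensional theorem, which asks for robustness against strictly more perturbations than you control.) There is a second structural mismatch you would also have to face: a perturbed Hamiltonian flow lives on the perturbed surface $\mathcal{E}_{\tilde H,\tilde e}$, a different (if nearby) three-manifold, so it is not even literally a divergence-free perturbation of the field on the fixed manifold $\mathcal{E}^\star_{H,e}$; conversely, generic divergence-free perturbations preserve no energy function. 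The two perturbation classes are incomparable, which is precisely why \cite{MBJR} cannot be cited as a black box.

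What the paper does instead is to \emph{redo} the strategy of \cite{MBJR} inside the Hamiltonian category, which is feasible because the needed perturbations act on the transversal linear Poincar\'e flow $\Phi_H^t$, a two-dimensional symplectic (hence area-preserving) cocycle, and Vivier's Hamiltonian version of Franks' lemma (Lemma~\ref{Frank}) realizes arbitrary transversal symplectic perturbations along orbit segments. Concretely: assuming no dominated splitting over $\mathcal{E}^\star_{H,e}$, one combines the absence of elliptic orbits (forced by the star property via Lemmas~\ref{prop1}, \ref{prop2} and \ref{lemateta}) with the Lyapunov-exponent decay techniques of Theorem~\ref{bessa2}, Poincar\'e recurrence, and the Hamiltonian closing lemma (Lemma~\ref{closing}) to produce a long closed orbit whose multipliers are simultaneously close to neutral and exponentially dominated, a contradiction; this is Lemma~\ref{lemasing}. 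Then Lemma~\ref{lemanosov} upgrades domination to uniform hyperbolicity directly, using the conservativity of $\Phi_H^t$ on one-dimensional subbundles, the angle bound from domination, and the bounds $K^{-1}\le\|X_H\|\le K$ on a regular surface, concluding via the Doering-type equivalence of Lemma~\ref{hyperbolicpoinc}. Your treatment of the second case is essentially the paper's Lemma~4.3 (finitely many hyperbolic critical points, so nearby $\tilde e$ gives a regular surface with $(H,\tilde e)\in\mathcal{G}^2(M)$ by the $\delta$-interval built into Definition~\ref{hss}), and is fine; but the first case needs the dominated-splitting-plus-upgrade argument above rather than an appeal to the three-dimensional divergence-free theorem.
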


As a consequence of Theorem \ref{mainth} we obtain the following result.

\begin{corollary}\label{maincorollary}
In dimension four, the boundary of the Anosov Hamiltonian level set has no isolated points.
\end{corollary}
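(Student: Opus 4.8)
The plan is to argue by contradiction, exploiting the openness of the Anosov condition together with the dichotomy furnished by Theorem~\ref{mainth}. Write $\mathcal{A}$ for the set of Anosov Hamiltonian levels, regarded as a subset of the space of pairs $(H,e)$ (with the product of the $C^2$-topology on Hamiltonians and the usual topology on energies). Since a uniformly hyperbolic energy component persists both under $C^2$-perturbations of $H$ and under small changes of $e$, the set $\mathcal{A}$ is open; in particular $\mathcal{A}\cap\partial\mathcal{A}=\emptyset$. Suppose, for contradiction, that $(H_0,e_0)\in\partial\mathcal{A}$ is an \emph{isolated} point of the boundary, and fix a neighbourhood $V=\mathcal{U}\times(e_0-\delta,e_0+\delta)$ with $V\cap\partial\mathcal{A}=\{(H_0,e_0)\}$. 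As $V$ is an open subset of an infinite-dimensional normed space, the punctured neighbourhood $V\setminus\{(H_0,e_0)\}$ is connected, and it is the disjoint union of the two open sets $V\cap\mathcal{A}$ and $V\setminus\overline{\mathcal{A}}$. Because $(H_0,e_0)\in\overline{\mathcal{A}}$ the first set is nonempty, so connectedness forces $V\setminus\{(H_0,e_0)\}\subseteq\mathcal{A}$: \emph{every} level in a punctured neighbourhood of $(H_0,e_0)$ is Anosov.

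The key step is then to promote the central level to a star system, i.e.\ to show $(H_0,e_0)\in\mathcal{G}^2(M)$. The robustness demanded by Definition~\ref{hss}, together with the hyperbolicity of all closed orbits and the absence of singularities on the perturbed components, is immediate from the previous paragraph, since all nearby levels are Anosov; what remains is to control the central level itself. Here I would use that, for a four-dimensional Hamiltonian, the transverse return map of a closed orbit lying on a regular energy surface is area-preserving, so a non-hyperbolic closed orbit is either elliptic or parabolic. An elliptic orbit persists (as elliptic) under perturbation, and a parabolic one can be perturbed to an elliptic one; in either case $X_{H_0}^t$ would carry a non-hyperbolic closed orbit surviving on a nearby level, contradicting that the punctured neighbourhood is Anosov. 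Hence all closed orbits of $\mathcal{E}^\star_{H_0,e_0}$ are hyperbolic, so $(H_0,e_0)\in\mathcal{G}^2(M)$.

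Finally I would invoke the dichotomy of Theorem~\ref{mainth}. If $\mathrm{Crit}(H_0|_{\mathcal{E}^\star_{H_0,e_0}})=\emptyset$, the first alternative applies, $X_{H_0}^t|_{\mathcal{E}^\star_{H_0,e_0}}$ is Anosov, and thus $(H_0,e_0)\in\mathcal{A}$, contradicting $\mathcal{A}\cap\partial\mathcal{A}=\emptyset$. If instead $\mathcal{E}^\star_{H_0,e_0}$ contains a (hyperbolic) critical point $p$, then $p$ is a nondegenerate zero of $X_{H_0}$, so by the implicit function theorem every $\tilde H$ that is $C^2$-close to $H_0$ has a hyperbolic singularity $\tilde p$ near $p$, at energy $\tilde e=\tilde H(\tilde p)$ close to $e_0$. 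The level $(\tilde H,\tilde e)$ is not Anosov because it carries a singularity, yet it lies in $\mathcal{G}^2(M)$ (the star property is open) and has critical points, so the second alternative of Theorem~\ref{mainth} yields energies arbitrarily close to $\tilde e$ whose levels are Anosov; hence $(\tilde H,\tilde e)\in\partial\mathcal{A}$. Choosing $\tilde H\neq H_0$ arbitrarily close to $H_0$ produces boundary points distinct from and arbitrarily near $(H_0,e_0)$, contradicting its isolation. Either way we reach a contradiction, so $\partial\mathcal{A}$ has no isolated points.

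I expect the main obstacle to be the membership $(H_0,e_0)\in\mathcal{G}^2(M)$, that is, ruling out non-hyperbolic behaviour on the limiting level itself. The closed-orbit part rests on the openness of the elliptic condition for area-preserving return maps and on perturbing parabolic orbits, while the most delicate point is excluding, or otherwise accommodating, \emph{degenerate} critical points of $H_0$: these need not persist under perturbation, so an energy surface that is hyperbolic away from a degenerate singularity falls outside the clean dichotomy above and must be analysed separately, presumably by a local perturbation of $H_0$ that either removes the degeneracy and places us in the regular case or reproduces a genuine critical point on a nearby level.
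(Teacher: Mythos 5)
Your argument is sound and ends in the same contradiction as the paper's, but its middle section takes a genuinely different route. The paper never promotes the isolated boundary point to $\mathcal{G}^2(M)$: it first proves (its Claim~\ref{cl}) that every energy surface of the level is regular, by perturbing any critical point --- hyperbolic or degenerate --- so as to plant a critical point on a nearby level which, by isolation, must be Anosov, an absurdity; it then observes that the only use of the star hypothesis in Lemma~\ref{lemasing} is to rule out elliptic closed orbits for all $C^2$-nearby Hamiltonians, a property that isolation supplies for free (Remark~\ref{remaelliptic}), so the dominated splitting and then Lemma~\ref{lemanosov} yield Anosov directly, without ever invoking Theorem~\ref{mainth} as a statement. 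You instead verify the star property on the limiting level (hyperbolicity of its closed orbits via persistence of elliptic orbits and perturbation of parabolic ones) and then quote Theorem~\ref{mainth} as a black box; in the critical-point case you use its second alternative to manufacture boundary points arbitrarily close to $(H_0,e_0)$, contradicting isolation --- a neat alternative to the paper's Claim~\ref{cl}, though note that by your own first paragraph the level $(\tilde H,\tilde e)$ you construct lies in the punctured neighbourhood and is therefore already ``Anosov with a singularity,'' so the shorter contradiction was available. What each approach buys: yours reuses the theorem's statement rather than its proof, at the price of the star verification, whose delicate point --- degenerate critical points of $H_0$ --- you correctly flag; the paper resolves it exactly as you guess, perturbing the degenerate singularity into a hyperbolic one on a nearby (hence Anosov, hence regular) level. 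Your connectedness argument also makes rigorous a step the paper leaves implicit, namely that the whole punctured neighbourhood lies in $\mathcal{A}^4(M)$. One looseness you share with the paper: ``carries a singularity, hence not Anosov'' is literally valid only for the distinguished component, since membership in $\mathcal{A}^{2d}(M)$ asks for \emph{some} hyperbolic energy surface; both proofs implicitly track the analytic continuation of a fixed component, and with that convention your argument goes through.
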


We also prove that the Anosov Hamiltonian levels form an open set (Theorem~\ref{Aopen}) and are (strongly) structurally stable (Theorem~\ref{Ass}). Notice that, due to the openness of the Anosov Hamiltonian levels, the reciprocal of Theorem~\ref{mainth} is trivial.
Finally, we show that structurally stable Hamiltonian levels are Anosov (Theorem~\ref{ssA}).

\medskip 
In Section \ref{prel} we present all needed ingredients in the Hamiltonian framework. Section \ref{tools} contains some useful perturbation lemmas and some auxiliary results that will be needed in Section \ref{resultproof}, which contains the proof of the main theorem, obtained in two steps. Given a Hamiltonian star system defined on a regular energy surface we prove that the associated transversal linear Poincar\'{e} flow admits a dominated splitting over the considered energy surface. With this, afterwards we show how we can reach hyperbolicity.  

In Section~\ref{anosov}, following classic arguments of hyperbolic dynamics (see \cite{brin,Katok}), we present the proof of the openness and strong structural stability of Anosov Hamiltonian levels defined on a symplectic $2d$-dimensional manifold $M$. This result is used by several authors and here we present a proof for future use. For this, the result on the continuity of hyperbolic sets will be very useful. We also prove that, in dimension four, structurally stable Hamiltonian levels are Anosov.

\end{section}


\begin{section}{Preliminaries}\label{prel}

\begin{subsection}{Notation and basic definitions}\label{notation}

Let $(M,\omega)$ be a compact symplectic manifold, where $M$ is a four-dimensional, smooth and compact manifold endowed with a symplectic structure $\omega$, i.e. a skew-symmetric and nondegenerate 2-form on the tangent bundle $TM$.

A $C^1$ diffeomorphism $f$ defined on $(M,\omega)$ is called a \textit{symplectomorphism} if, taking $p\in M$, $\omega(u,v)=\omega(D_pf(u),D_pf(v)),\; \forall\: (u,v)\in T_pM\times T_pM$.
The set of all symplectomorphism forms a group under composition, called the \textit{symplectic group}, denoted by $Sp(M,\omega)$. The condition that $f\in Sp(M,\omega)$ can be expressed in matrix notation. Since $\omega$ is a symplectic form, there is an ordered basis of $M$ such that the matrix of $\omega$ is
\begin{equation}
J=\left[
\begin{array}{ll}
	0 & I_2\\
	-I_2 & 0
\end{array}\nonumber
\right],
\end{equation}
where $I_2$ denotes the identity matrix with dimension $2$, once $M$ has dimension $4$. Note that $J^{-1}=J^{\:T}=-J$ and $J^{\:2}=-I_2$. Take $f\in Diff(M, \omega)$ such that, relatively with the mentioned ordered basis, $D_pf$ has matrix 
\begin{equation}
A=\left[
\begin{array}{ll}
	A_1 & A_2\\
	A_3 & A_4
\end{array}\nonumber
\right],
\end{equation}
where $A_1, A_2, A_3, A_4$ are $2\times 2$ matrices. 

We have that $f\in Sp(M,\omega)$ if and only if $A^TJ\:A=J$ or, equivalently, $A_1^TA_3$ and $A_2^TA_4$ are symmetric and $A_1^TA_4-A_3^TA_2=I_2$.

The next elementary result states some conclusions about the eigenvalues of the linear part of a symplectomorphism.

\begin{theorem} (Symplectic eigenvalue theorem, \cite{Abraham})\label{sympeigen}
Let $f\in Sp(M,\omega)$, $p\in M$ and $\lambda$ an eigenvalue of $D_pf$ of multiplicity $k$. Then $1/\lambda$ is an eigenvalue of $D_pf$ of multiplicity $k$. Moreover, the multiplicities of the eigenvalues $+1$ and $-1$, if they occur, are even.
\end{theorem}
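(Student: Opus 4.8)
The plan is to reduce everything to the defining matrix identity $A^{T}JA=J$ for $A:=D_pf$, together with the relations $J^{-1}=J^{T}=-J$ and $J^{2}=-I_2$ recorded above, and to read off the spectral consequences from the characteristic polynomial of $A$.

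First I would establish the reciprocal symmetry of the spectrum. Rewriting $A^{T}JA=J$ as $A^{T}=JA^{-1}J^{-1}$ shows that $A^{T}$ is conjugate to $A^{-1}$, hence has the same characteristic polynomial; since a matrix and its transpose always share the characteristic polynomial, $A$ and $A^{-1}$ have identical characteristic polynomials. Therefore the multiset of complex eigenvalues of $A$, counted with algebraic multiplicity, coincides with that of $A^{-1}$, namely $\{1/\lambda\}$. This immediately yields that $\lambda$ and $1/\lambda$ occur with the same multiplicity $k$, which is the first assertion. Equivalently, the characteristic polynomial is \emph{self-reciprocal} (palindromic) of degree $4$.

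For the parity statement I would argue on this reciprocation-invariant multiset $S$ of the four eigenvalues. The only self-reciprocal values are $+1$ and $-1$; any other eigenvalue $\mu$ is matched with a distinct partner $1/\mu$ of equal multiplicity, so the eigenvalues different from $\pm 1$ contribute an even total. Since $\dim M=4$ is even, it follows that $m_{+}+m_{-}$ is even, where $m_{\pm}$ denotes the multiplicity of $\pm1$. To separate the two quantities I would invoke $\det A=1$: taking determinants in $A^{T}JA=J$ gives $(\det A)^{2}=1$, and the Pfaffian identity $\mathrm{Pf}(A^{T}JA)=\det(A)\,\mathrm{Pf}(J)$ together with $\mathrm{Pf}(J)\neq 0$ pins the sign down to $\det A=+1$. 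Writing $\det A$ as the product over $S$ and pairing $\mu$ with $1/\mu$ shows $\det A=(-1)^{m_{-}}$, so $m_{-}$ is even; combined with $m_{+}+m_{-}$ even, this forces $m_{+}$ even as well.

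The main obstacle is precisely the evenness of the multiplicities of $\pm1$: the reciprocal symmetry fixes these two values and by itself does not rule out an odd multiplicity, so an extra input is genuinely needed. The delicate point is thus the refinement from $\det A=\pm1$ to $\det A=+1$, supplied here by the Pfaffian. An alternative to the Pfaffian would be to exploit the self-reciprocal structure of the characteristic polynomial directly, peeling off factors $(\lambda\mp1)^{2}$ by an induction that uses the functional equation $p(\lambda)=\lambda^{4}p(1/\lambda)$ and its derivative at $\lambda=\pm1$; but in this four-dimensional setting the determinant computation is the cleaner route.
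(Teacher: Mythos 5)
Your proof is correct. Note that the paper itself offers no proof of this statement: it is quoted as a classical result with a citation to Abraham--Marsden, and the argument given there is essentially yours --- from $A^{T}JA=J$ one deduces that $A$ is conjugate to $(A^{-1})^{T}$, hence shares its characteristic polynomial with $A^{-1}$, and the parity of the multiplicities of $\pm1$ follows from $\det A=1$ together with the pairing $\mu\leftrightarrow 1/\mu$ of the remaining eigenvalues. All the steps check out: $A^{T}=JA^{-1}J^{-1}$ is immediate from the symplectic relation, invertibility of $A$ (so that $1/\lambda$ makes sense) follows from $(\det A)^{2}=1$, and the Pfaffian identity $\mathrm{Pf}(A^{T}JA)=\det(A)\,\mathrm{Pf}(J)$ with $\mathrm{Pf}(J)\neq0$ correctly pins down $\det A=+1$. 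Two small remarks. First, your parenthetical claim that the characteristic polynomial is palindromic is premature at the point where you make it: the reciprocal symmetry of the spectrum alone gives $p(\lambda)=\pm\lambda^{4}p(1/\lambda)$, with the sign $\det A$, so the polynomial could a priori be anti-palindromic; since the remark is unused until after you establish $\det A=+1$, this is cosmetic, but the order of the claims should be fixed. Second, in this paper's setting there is a route to $\det A=1$ that avoids Pfaffians and fits the surrounding text better: a symplectomorphism preserves $\omega$, hence preserves the Liouville volume form $\omega\wedge\omega$ (which the paper invokes in Section 2.2), and preservation of a volume form gives $\det D_pf=1$ directly. Either way the conclusion $m_{-}$ even, and then $m_{+}$ even from $m_{+}+m_{-}\equiv 0 \pmod 2$, is sound.
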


We will be interested in the Hamiltonian dynamics of real-valued $C^s$ functions on $M$, $2\leq s\leq \infty$, constant on each connected component of the boundary of $M$, called Hamiltonians, whose set we denote by $C^s(M,\mathbb{R})$.  For any \textit{Hamiltonian} function $H:M\longrightarrow \mathbb{R}$ there is a co\-rres\-pon\-ding \textit{Hamiltonian vector field} $X_H:M\longrightarrow TM$,  tangent to the boundary of $M$, and determined by the condition 
\begin{displaymath}
d_pH(u)=\omega(X_H(p),u), \;\forall u\in T_pM, 
\end{displaymath}
where $p\in M$. In the matricial framework, this is equivalent to  $X_H(p)=J \cdot d_pH$. So, notice that $D_pX_H=J\cdot D^2_pH$. The nondegeneracy of the form $\omega$ guarantees that $X_H$ is well defined, while the skew-symmetry of $\omega$ leads to conservative properties for the Hamiltonian vector field. Notice that $H$ is $C^s$ if and only if $X_H$ is $C^{s-1}$ (see section~\ref{measure}). Here we consider the space of the Hamiltonian vector fields endowed with the $C^1$ topology, and for that we consider $C^s(M,\mathbb{R})$ equipped with the $C^2$ topology. This space can also be endowed with the $C^r$-topology, $1 \leq r<s$, and we denote by  $\|H-G\|_r$ the $C^r$-distance between $H$ and $G$.

The Hamiltonian vector field $X_H$ generates the \textit{Hamiltonian flow} $X_H^t$, a smooth 1-parameter group of symplectomorphisms on $M$ sa\-tis\-fying $\frac{d}{dt}X_H^t=X_H (X_H^t)$ and $X_H^0=id$. We also consider the \textit{tangent flow} $D_pX_H^t:T_pM\longrightarrow T_{X_H^t(p)}M$, for $p\in M$, that satisfies the linearized differential equation $\frac{d}{dt}D_pX_H^t=(D_{X_H^t(p)}X_H)\circ D_pX_H^t$, where $D_pX_H:T_pM\longrightarrow T_pM$.

Once $\omega$ is non-degenerate, given $p \in M$, $d_pH=0$ is equivalent to $X_H(p)=0$, and in this case we say that $p$ is a \textit{critical point} of $H$ or  a \textit{singularity} of $X_H$. A point is said to be \textit{regular} if it is not a critical point. We denote by $\mathcal{R}$ the set of regular points of $H$, by Crit ($H$) the set of critical points of $H$ and by Sing ($X_H$) the set of singularities of $X_H$. Taking in account the relation between $H$ and $X_H$, observe that $\text{Sing} (X_H)=\text{Crit} (H)$. 

A closed orbit of $H$ with period $\pi$ is a closed orbit of  $X_H^t$ with period $\pi$. Given a regular point $x$ of a Hamiltonian $H$, we define the arc $X_H^{[t_1,t_2]}(x)=\{X_H^t(x), \, t \in [t_1,t_2]\}$; given a transversal section $\Sigma$ of $x$, a flowbox associated to $\Sigma$ is defined by $\mathcal{F}(x)=X_H^{[-\tau_1,\tau_2]}(\Sigma)$, where $\tau_1, \tau_2$ are chosen small such that $\mathcal{F}(x)$ is a neighbourhood of $x$ foliated by regular orbits.

Let $H$ be a Hamiltonian. Any scalar $e\in H(M)\subset \mathbb{R}$ is called an \textit{energy} of $H$ and $H^{-1}(\left\{e\right\})=\left\{p\in M: H(p)=e\right\}$ is the corresponding \textit{energy level} set. It is $X_H^t$-invariant. An \textit{energy surface} $\mathcal{E}_{H,e}$ is a connected component of $H^{-1}(\left\{e\right\})$; we say that it is regular if it does not contain critical points and in this case $\mathcal{E}_{H,e}$ is a regular compact 3-manifold. Moreover, $H$ is constant on each connected component $\mathcal{E}_{H,e}$ of the boundary $\partial M$.

Due to the compactness of $M$, given a Hamiltonian function $H$ and $e\in H(M)$ the energy level $H^{-1}(\left\{e\right\})$ is the union of a finite number of disjoint compact connected components, separated by a positive distance.  Given $e \in H(M)$,  the pair $(H,e) \subset C^2(M,\mathbb{R}) \times \mathbb{R}$ is called a \textit{Hamiltonian level}; if we fix $\mathcal{E}_{H,e}$ and a small neighbourhood $\mathcal{W}$ of $\mathcal{E}_{H,e}$ there exist a small neighbourhood $\mathcal{U}$ of $H$ and $\delta>0$ such that for all $\tilde{H} \in \mathcal{U}$ and $\tilde{e} \in ]e-\delta,e+\delta[$ one has that $\tilde{H}^{-1}(\{\tilde{e}\})\cap \mathcal{W}=\mathcal{E}_{\tilde{H},\tilde{e}}$. We call $\mathcal{E}_{\tilde{H},\tilde{e}}$ the analytic continuation of $\mathcal{E}_{H,e}$.

On $M$ we also fix a Riemannian structure which induces a norm $\left\|.\right\|$ on the fibers $T_pM$, $\forall \:p\in M$. We will use the standard norm of a bounded linear map $L$ given by 
\begin{equation}
\displaystyle\left\|L\right\|=\sup_{\left\|u\right\|=1}\left\|L(u)\right\|.\nonumber
\end{equation}

A metric on $M$ can be derived in the usual way through the Darboux's charts and it will be denoted by $dist$. Hence, we define the open balls $B_r(p)$ of the points $x\in M$ verifying $dist(x,p)<r$.

We end this section introducing a crucial definition. We introduce the notion  of {\it Hamiltonian star system} which is similar to the one of star conservative flow.

\begin{definition}\label{hss}

A Hamiltonian level $(H,e)$  is a \textit{Hamiltonian star system}
if there exist a $C^2$-neighbourhood $\mathcal{U}$ of $H$ and $\delta>0$ such that if $\tilde{H}\in \mathcal{U}$ and $\tilde{e}\in (e-\delta,e+\delta)$, then all the closed orbits and all the critical points of $\tilde{H}$ on $\mathcal{E}^\star_{\tilde{H},\tilde{e}}$ are hyperbolic, where $\mathcal{E}^\star_{\tilde{H},\tilde{e}}$ is the analytic continuation of $\mathcal{E}^\star_{H,e}$. 

We denote by $\mathcal{G}^{2}(M) \subset C^2(M,\mathbb{R}) \times \mathbb{R}$  the set of all Hamiltonian star systems.
\end{definition}

\end{subsection}
\begin{subsection}{Measure and topological dimension}\label{measure}

The symplectic manifold $(M,\omega)$ is also a volume manifold by \textit{Liouville's Theorem} (see for example~\cite{Abraham}). So, the volume form $\omega^2=\omega\wedge\omega$ induces a measure $\mu$ on $M$ that is the Lebesgue measure associated to $\omega^2$.

Notice that the measure $\mu$ on $M$ is preserved by the Hamiltonian flow. So, given any energy $e$ of a Hamiltonian $H$, on each regular energy surface $\mathcal{E}_{H,e}\subset H^{-1}(\left\{e\right\})\subset M$ we induce a volume form $\omega_{\mathcal{E}_{H,e}}$:
\begin{align}
\omega_{\mathcal{E}_{H,e}}:\;&T_p\mathcal{E}_{H,e} \times T_p\mathcal{E}_{H,e}\times T_p\mathcal{E}_{H,e} \longrightarrow \mathbb{R}\nonumber\\
& (u,v,w)\longmapsto \omega^2(d_pH,u,v,w), \;\: \forall \:p\in \mathcal{E}_{H,e}\nonumber.
\end{align}
We have that $\omega_{\mathcal{E}_{H,e}}$ is $X_H^t$-invariant. So, it induces an invariant volume measure $\mu_{\mathcal{E}_{H,e}}$ on $\mathcal{E}_{H,e}$ that is finite, since energy surfaces are compact. Notice that, under these conditions, we can apply the \textit{Poincar\'{e} Recurrence Theorem}. Therefore, we have that $\mu_{\mathcal{E}_{H,e}}$-a.e. $x\in \mathcal{E}_{H,e}$ is recurrent.

\begin{definition}
We say that the measure $\mu_{\mathcal{E}_{H,e}}$ is \textit{ergodic} if, for any $X_H^t$-invariant subset of $\mathcal{E}_{H,e}$, say $\Lambda$, we have that $\mu_{\mathcal{E}_{H,e}}(\Lambda)=0$ or $\mu_{\mathcal{E}_{H,e}}(\Lambda)=1$.
\end{definition}

There are different definitions of the \textit{topological dimension} of a topological space $X$, say $\dim(X)$, which are equivalent just for separable metrizable spaces. In the formulation of Menger, the dimension of a space is the least integer $n$ for which every point has arbitrarily small neighbourhoods whose boundaries have dimension less than $n$. There is a result, due to Szpilrajn (\cite{szpil}), relating the topological dimension with the Lebesgue measure. 

\begin{definition}
Let $n \geq 0$. 
We say that $X$ has dimension $\leq n$, $\dim (X)\leq n$, if there exists a basis of $X$ made up of open sets whose boundaries have dimension $\leq n-1$. Also, we say that $X$ has dimension $n$ if $\dim (X)\leq n$ is true and $\dim (X)\leq n-1$ is false.
\end{definition}
This property is topologically invariant. Even more, if $X$ is compact we have that $\dim (X)\leq n$ if and only if any two distinct points (or disjoint closed sets) can be separated by a closed set of dimension $\leq n-1$. 

The following result relates a metrical concept with a topological one.

\begin{theorem}\label{szpil} (E. Szpilrajn, \cite{szpil}) Let $X\subset\mathbb{R}^n$ be a topological space. If $X$ has zero Lebesgue measure then $\dim(X)<n$.
\end{theorem}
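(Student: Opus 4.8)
The plan is to argue by induction on the ambient dimension $n$, using the inductive characterization of topological dimension recalled above together with Fubini's theorem to produce a convenient basis of small open sets whose boundaries meet $X$ only in lower-dimensional pieces. For the base case $n=1$, if $X\subset\mathbb{R}$ has zero Lebesgue measure then its complement is dense in $\mathbb{R}$; hence, given $x\in X$ and $\varepsilon>0$, I can pick $a<x<b$ with $b-a<\varepsilon$ and $a,b\notin X$, so that the trace $(a,b)\cap X$ is open in $X$ with empty boundary. Thus $X$ admits a basis of clopen sets and $\dim(X)\le 0=n-1$.

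For the inductive step, assume the statement holds in $\mathbb{R}^{n-1}$ and let $X\subset\mathbb{R}^n$ be a null set (the empty case being trivial). Fix $x=(x_1,\dots,x_n)\in X$ and $\varepsilon>0$. The key tool is Fubini's theorem: for each coordinate $i$, since $\int_{\mathbb{R}}\lambda_{n-1}\bigl(X\cap\{y_i=t\}\bigr)\,dt=\lambda_n(X)=0$, the slice $X\cap\{y_i=t\}$ is $(n-1)$-null for almost every $t$. I can therefore choose cutting values $a_i<x_i<b_i$ arbitrarily close to $x_i$ so that all $2n$ hyperplanes $\{y_i=a_i\}$ and $\{y_i=b_i\}$ meet $X$ in $(n-1)$-null sets, while the open box $U=\prod_i(a_i,b_i)$ has diameter less than $\varepsilon$.

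Then $U\cap X$ is an open neighbourhood of $x$ in $X$ whose boundary, taken in $X$, is contained in the finite union $\bigcup_i\bigl[(X\cap\{y_i=a_i\})\cup(X\cap\{y_i=b_i\})\bigr]$. Identifying each hyperplane with $\mathbb{R}^{n-1}$, the induction hypothesis gives that every such slice has topological dimension $\le n-2$; by monotonicity (a subspace has dimension no larger than the ambient space) and the finite sum theorem for closed subsets of a separable metric space, the boundary $\partial_X(U\cap X)$ also has dimension $\le n-2$. As $x$ and $\varepsilon$ were arbitrary, $X$ possesses a basis of open sets whose boundaries have dimension $\le n-2$, whence $\dim(X)\le n-1<n$ by definition.

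The main delicate points are the invocation of Fubini for the null set $X$ (legitimate, since Lebesgue-null sets are measurable and slicing preserves nullity almost everywhere) and the two structural facts from dimension theory, monotonicity and the sum theorem, which are valid here precisely because $X$, being a subset of $\mathbb{R}^n$, is separable and metrizable. These are exactly the ingredients that let the lower-dimensional information about the slices be assembled into a bound on the dimension of the whole boundary.
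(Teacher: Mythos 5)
Your proof is correct, but note first that the paper itself offers no proof of this statement: it is quoted as a classical theorem with a citation to Hurewicz--Wallman \cite{szpil}, so there is no internal argument to compare against, and your blind attempt must stand on its own --- which it does. The base case is right (for $a,b\notin X$ the trace $(a,b)\cap X$ is clopen in $X$, so $\dim X\le 0$); the slicing step is legitimate, since a $\lambda_n$-null set is contained in a Borel ($G_\delta$) null hull whose hyperplane sections are $\lambda_{n-1}$-null for almost every parameter, so good cutting values $a_i,b_i$ exist in any prescribed interval around $x_i$; the inclusion $\partial_X(U\cap X)\subset \partial U\cap X\subset\bigcup_i\bigl[(X\cap\{y_i=a_i\})\cup(X\cap\{y_i=b_i\})\bigr]$ holds because $U\cap X$ is relatively open; and the two structural facts you invoke --- monotonicity of dimension under subspaces and the finite sum theorem for closed subsets --- are valid for separable metric spaces, with each slice $X\cap\{y_i=c\}$ indeed closed in the union. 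The conclusion then matches exactly the Menger-type inductive definition of dimension recalled in the paper: a basis of relatively open sets with boundaries of dimension $\le n-2$ gives $\dim X\le n-1<n$. The one genuine difference from the cited source is generality: Szpilrajn's theorem as proved in \cite{szpil} concerns Hausdorff measure on arbitrary separable metric spaces, where no product structure is available, and the slicing there is done by concentric spheres rather than coordinate hyperplanes. Your argument buys elementariness (plain Fubini and rectangular boxes) at the price of being tied to Lebesgue measure on $\mathbb{R}^n$ --- which is, however, all that the statement in the paper requires.
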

\medskip

\end{subsection}


\begin{subsection}{Lyapunov exponents}\label{Lyap}

Take $H \in C^2(M,\mathbb{R})$. Since $DX_H^t$ is measure preserving, we have a version of \textit{Oseledets' Theorem} (\cite{Oseledets}) for four dimensional Hamiltonians. For $\mu$-a.e. point $x\in M$ we have two possible splittings:

\begin{enumerate}
	\item $T_xM=E_x$, with $E_x$ four-dimensional and
\begin{displaymath}
		\displaystyle \lim_{t\rightarrow \pm\infty}\frac{1}{t}\log \left\|DX_H^t(x)\:v\right\|=0, \ \ \forall v\in E_x\setminus\{0\}, or 
\end{displaymath}

\item $T_xM=\mathbb{R}X_H(x)\oplus E_x^0\oplus E_x^+\oplus E_x^-$, with each one of these subspaces being one-dimensional and $DX_H^t$-invariant, and

\begin{itemize}
	\item  $\displaystyle \lim_{t\rightarrow \pm\infty}\frac{1}{t}\log \left\|DX_H^t(x)\:v\right\|=0, \ \ \forall v\in \mathbb{R}X_H(x)\oplus E_x^0\setminus\{0\}$ ;
	\item $\lambda^+(H,x):=\displaystyle \lim_{t\rightarrow \pm\infty}\frac{1}{t}\log \left\|DX_H^t(x)\:v\right\|>0, \ \ \forall v\in E_x^+\setminus\{0\}$;
	\item $\lambda^-(H,x):=\displaystyle \lim_{t\rightarrow \pm\infty}\frac{1}{t}\log \left\|DX_H^t(x)\:v\right\|=-\lambda^+(H,x), \ \ \forall v\in E_x^-\setminus\{0\}$.
		
\end{itemize}
\end{enumerate}

The splitting of the tangent bundle is called \textit{Oseledets' splitting} and the real numbers $\lambda^{\pm}(H,x)$ are called the \textit{Lyapunov exponents}. The full $\mu$-measure set of the \textit{Oseledets points} is denoted by $\mathcal{O}(H)$.

\end{subsection}


\begin{subsection}{Transversal linear Poincar\'{e} flow of a Hamiltonian}\label{Poin}

Let $H\in C^2(M,\mathbb{R})$, $e\in H(M)$, $x\in \mathcal{R}$ and take the orthogonal splitting $T_xM=N_x\oplus \mathbb{R}X_H(x)$, where $N_x=(\mathbb{R}X_H(x))^{\bot}$ is the normal fiber at $x$ and $\mathbb{R}X_H(x)$ denotes the vector field direction. Consider the skew-product automorphism of vector bundles
\begin{align}
DX_H^t:& \ \ T_{\mathcal{R}}M\longrightarrow T_{\mathcal{R}}M\nonumber\\
& \ \ (x,v)\longmapsto (X_H^t(x), DX_H^t(x)v).\nonumber
\end{align}

Since, in general, the subbundle $N_{\mathcal{R}}$ is not $DX_H^t$-invariant, we are going to relate the $DX_H^t$-invariant quotient space $\tilde{N_{\mathcal{R}}}=T_{\mathcal{R}}M /\mathbb{R}X_H$ with an isometric isomorphism $h_1:N_{\mathcal{R}}\rightarrow \tilde{N}_{\mathcal{R}}$. Denote the canonical orthogonal projection by $\Pi_{{\mathcal{R}}}: T_{\mathcal{R}}M\rightarrow N_{\mathcal{R}}$. So, the unique map
\begin{align}
P_H^t: &\ \ N_{\mathcal{R}}\rightarrow N_{\mathcal{R}}\nonumber\\
&\ \ (x,v)\mapsto \Pi_{X_H^t(x)}\circ DX_H^t(x)v\nonumber
\end{align}
such that $h_1\circ P_H^t=DX_H^t\circ h_1$ is called the \textit{linear Poincar\'{e} flow} associated to $H$, which was first introduced by Doering in~\cite{Doering}.

Now consider
\begin{displaymath}
\mathcal{N}_x=N_x\cap T_xH^{-1}(\left\{e\right\}) 
\end{displaymath}
where $T_xH^{-1}(\left\{e\right\})=Ker\: dH(x)$ is the tangent space to the energy level set with $e=H(x)$. Thus, $\mathcal{N_R}$ is $P_H^t$-invariant and we can define the \textit{transversal linear Poincar\'{e} flow} for $H$
\begin{align}
\Phi_H^t: &\ \ \mathcal{N}_{\mathcal{R}}\rightarrow \mathcal{N}_{\mathcal{R}}\nonumber\\
&\ \ (x,v)\mapsto \Pi_{X_H^t(x)}\circ DX_H^t(x)\:v\nonumber
\end{align}
that is $\Phi_H^t=P_H^t|_{\mathcal{N}_{\mathcal{R}}}$, it is a linear symplectomorphism for the symplectic form induced by $\omega$ on $\mathcal{E}_{H,e}$.

If $x\in \mathcal{R}\cap \mathcal{O}(H)$ and $\lambda^+(H,x)>0$, the Oseledets splitting on $T_xM$ induces a $\Phi_H^t(x)$-invariant splitting $\mathcal{N}_x=\mathcal{N}_x^+\oplus \mathcal{N}_x^-$, where $\mathcal{N}_x^{\pm}=\Pi_x(E_x^{\pm})$ are one dimensional subbundles. It is straightforward to see that the Lyapunov exponents of this splitting coincide with that ones of the $DX_H^t$-invariant splitting (see \cite[Lemma 2.1]{MBJLD}). 

\bigskip
Let $\Gamma\subset M$ be a closed orbit of period $\pi$. The characteristic multipliers of $\Gamma$ are the eigenvalues of $\Phi_H^{\pi}(p)$, which are independent of the point $p\in\Gamma$. If $\chi$ is a characteristic multiplier of a closed orbit $\Gamma$ of period $\pi$, then the associated Lyapunov exponent is $\lambda=\log(\chi)/\pi$. In our context the product of the characteristic multipliers is equal to one, or equivalently the sum of the two Lyapunov exponents is equal to zero (cf. Theorem~\ref{sympeigen} ). We say that $\Gamma$ is
\begin{itemize}	
	\item \textit{hyperbolic} when the characteristic multipliers have modulus different from 1;
	\item \textit{parabolic} when the characteristic multipliers are real and of modulus 1; 
  \item \textit{elliptic} when the two characteristic multipliers are simple, non-real and of mo\-du\-lus 1.
\end{itemize}

So, under small perturbations, hyperbolic and elliptic orbits are stable, whilst parabolic ones are unstable. 

\end{subsection}


\begin{subsection}{Anosov Hamiltonian  level}\label{hyper}

Let $H\in C^2(M,\mathbb{R})$. Given any compact and $X_H^t$-invariant set $\Lambda\subset \mathcal{E}_{H,e}$, we say that $\Lambda$ is a hyperbolic set for $X_H^t$ if there exists $m\in \mathbb{N}$, a constant $\theta\in(0,1)$ and a $DX_H^t$-invariant splitting $T_{\Lambda}\mathcal{E}_{H,e}=E^-\oplus  E \oplus E^+ $ such that, for all $x\in \Lambda$, we have:
\begin{itemize}
	\item $\left\|DX_H^m(x)|_{E^-_x}\right\|\leq \theta \ \ $ (uniform contraction),
	\item $\left\|DX_H^{-m}(x)|_{E^+_x}\right\|\leq\theta \ \ $ (uniform expansion),
	\item $E=E^0\oplus \mathbb{R}X_H(x)$, and includes the direction of the gradient of $H$.
\end{itemize}

\begin{definition}
We say that a Hamiltonian level $(H,e)\in C^2(M,\mathbb{R})\times H(M)$ is Anosov if and only if there exists an energy surface $\mathcal{E}_{H,e}$ which is hyperbolic for $X_H^t$. For $d\in\mathbb{N}\setminus\{1\}$, let $\mathcal{A}^{2d}(M) \subset C^2(M, \mathbb{R})\times  \mathbb{R}$ denote the set of Anosov Hamiltonian levels defined on a $2d$-dimensional manifold $M$.
\end{definition}

We observe that if $(H,e)$ is Anosov then the energy surface $\mathcal{E}_{H,e}$ is regular, that is does not contain critical points of $H$.


\begin{subsubsection}{Relation with $\Phi_H^t$}

Similarly, we can define a hyperbolic structure for the transversal linear Poincar\'{e} flow $\Phi_H^t$. A compact and $X_H^t$-invariant set $\Lambda\subset \mathcal{E}_{H,e}$ is called hyperbolic if there exists $m\in \mathbb{N}$ and a constant $\theta\in(0,1)$ such that, for every $x\in \Lambda$, $\mathcal{N}_x=\mathcal{N}^-_x\oplus \mathcal{N}^+_x$ and

\begin{itemize}
	\item $\left\|\Phi_H^m(x)|_{\mathcal{N}^-_x}\right\|\leq \theta$,
	\item $\left\|\Phi_H^{-m}(x)|_{\mathcal{N}^+_x}\right\|\leq \theta$,
	\item $\Phi_H^t(x)\mathcal{N}^-_x=\mathcal{N}^-_{X_H^t(x)}\:$ and $\:\Phi_H^t(x)\mathcal{N}^+_x=\mathcal{N}^+_{X_H^t(x)}$, $\forall \: t\geq 0$.
\end{itemize}

If $\Phi_H^t|{\Lambda}$ is a hyperbolic vector bundle automorphism, we say that $\Lambda$ is hyperbolic for $\Phi_H^t$ on $\Lambda$. Notice that, by compactness of $\Lambda$, to ensure that $\Lambda$ is hyperbolic it is enough to show hyperbolicity for just one $m\in \mathbb{N}$.

\bigskip
Following the ideas due to Hirsch, Pugh and Shub (\cite{HPS}), given a hyperbolic set $\Lambda$, we analogously have that $\mathcal{N}_x^-$ and $\mathcal{N}_x^+$ depend continuously on $x\in\Lambda$.

Next lemma relates the hyperbolicity for $\Phi_H^t$ with the hyperbolicity for $X_H^t$ and it is an extension to the Hamiltonian setting of a result due to Doering \cite[Proposition 1.1]{Doering} and is supported on an abstract invariant manifold theory result of Hirsch, Pugh and Shub (\cite[Lemma 2.18]{HPS}).

\begin{lemma}\label{hyperbolicpoinc}
Let $\Lambda$ be a $X_H^t$-invariant, regular and compact set. Then $\Lambda$ is hyperbolic for $X_H^t$ if and only if the induced transversal linear Poincar\'{e} flow $\Phi_H^t$ is hyperbolic on $\Lambda$.
\end{lemma}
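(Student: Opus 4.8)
The plan is to prove Lemma~\ref{hyperbolicpoinc} by transferring hyperbolicity between the tangent flow $DX_H^t$ on the full normal-plus-flow bundle and the transversal Poincar\'e flow $\Phi_H^t$ on $\mathcal{N}_\Lambda$, exploiting that the two differ only by projecting out directions along which the flow has subexponential growth. The one direction is essentially formal: if $\Lambda$ is hyperbolic for $X_H^t$ with splitting $T_\Lambda\mathcal{E}_{H,e}=E^-\oplus E\oplus E^+$, where $E=E^0\oplus\mathbb{R}X_H$, then I would push the stable and unstable subbundles forward under the orthogonal projection $\Pi_x$ onto $\mathcal{N}_x$, setting $\mathcal{N}^\pm_x=\Pi_x(E^\pm_x)$. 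One checks these are $\Phi_H^t$-invariant one-dimensional subbundles spanning $\mathcal{N}_x$, and the contraction/expansion estimates survive the projection because $\Pi_x$ is an isometry on the $\omega$-symplectic complement of the center direction and the projection distorts norms by a factor uniformly bounded away from $0$ and $\infty$ on the compact regular set $\Lambda$ (where $\|X_H(x)\|$ is bounded below). Raising to a suitable common power $m$ recovers the required $\theta\in(0,1)$.

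The reverse direction is the substantive one, and here I would invoke precisely the structure stated in the excerpt: this is ``an extension to the Hamiltonian setting of a result due to Doering \cite[Proposition 1.1]{Doering}'' resting on ``an abstract invariant manifold theory result of Hirsch, Pugh and Shub \cite[Lemma 2.18]{HPS}.'' Assuming $\Phi_H^t$ is hyperbolic on $\Lambda$ with splitting $\mathcal{N}_x=\mathcal{N}^-_x\oplus\mathcal{N}^+_x$, the task is to build a genuine $DX_H^t$-invariant splitting of $T_\Lambda\mathcal{E}_{H,e}$. The flow direction $\mathbb{R}X_H$ together with the gradient/center direction furnishes the central bundle $E$, on which growth is subexponential; the difficulty is that the hyperbolic subbundles $\mathcal{N}^\pm$ live in the \emph{quotient} bundle $\tilde N_\Lambda=T_\Lambda M/\mathbb{R}X_H$ (via the isomorphism $h_1$), and one must lift them to honest $DX_H^t$-invariant subbundles $E^\pm\subset T_\Lambda\mathcal{E}_{H,e}$ transverse to the center. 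The HPS invariant-section machinery provides exactly this lift: the hyperbolic splitting on the normal bundle, combined with the dominated/central behaviour along $E$, guarantees that the graph transform over the cocycle $DX_H^t$ has unique invariant continuous sections, yielding $E^\pm$ as graphs over $\mathcal{N}^\pm$ of a bounded linear map into $E$.

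The key steps in order are: (i) fix the decomposition $T_\Lambda\mathcal{E}_{H,e}=\mathcal{N}_\Lambda\oplus\mathbb{R}X_H$ and record that on the compact regular set the isometric isomorphism $h_1$ and the projection $\Pi$ have uniformly bounded distortion; (ii) for the easy direction, project the hyperbolic subbundles and verify invariance and the exponential estimates after passing to a common iterate $m$; (iii) for the hard direction, transport the $\Phi_H^t$-hyperbolic splitting to the quotient bundle $\tilde N_\Lambda$ and set up the $DX_H^t$-cocycle adapted to it; (iv) apply the HPS invariant-manifold/section theorem (with the center direction playing the role of the slowly-varying complement) to obtain the lifted $DX_H^t$-invariant bundles $E^\pm$, and then define $E=E^0\oplus\mathbb{R}X_H$; (v) confirm the contraction and expansion bounds transfer back and that $E$ contains the gradient direction, matching the definition in Section~\ref{hyper}. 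I expect the main obstacle to be step~(iv): making precise that the hyperbolic rates of $\Phi_H^t$ dominate the (possibly nontrivial) central growth along $E$, so that the HPS fixed-section argument applies and the lift $E^\pm$ is both invariant and hyperbolic rather than merely the naive preimage of $\mathcal{N}^\pm$. Controlling this requires the domination between the normal hyperbolicity and the center, which is where the compactness of $\Lambda$ and the symplectic constraint $\lambda^-=-\lambda^+$ do the decisive work.
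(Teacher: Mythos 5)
Your proposal takes essentially the same route as the paper, which offers no inline proof of Lemma~\ref{hyperbolicpoinc} and instead defers precisely to Doering's Proposition 1.1 and the Hirsch--Pugh--Shub invariant-section result (Lemma 2.18): the easy direction by projecting the hyperbolic bundles through $\Pi$ with uniformly bounded distortion on the compact regular set, and the converse by lifting the $\Phi_H^t$-splitting to $DX_H^t$-invariant bundles as graphs over $\mathcal{N}^{\pm}$ via the HPS graph transform. The sketch is correct in substance; the only quibble is that the decisive ingredient in step (iv) is merely the subexponential (indeed bounded) growth of $\|DX_H^t\|$ along the neutral directions, coming from compactness and regularity of $\Lambda$, rather than the symplectic constraint $\lambda^-=-\lambda^+$, which plays no essential role here.
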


\end{subsubsection}


\end{subsection}


\begin{subsection}{Dominated splitting}\label{dom}

Dominated splitting is a weaker form of hyperbolicity.
 
\begin{definition}
Let $H\in C^2(M,\mathbb{R})$ and $\Lambda\subset \mathcal{R}$ be a compact and $X_H^t$-invariant set and $m\in \mathbb{N}$. A splitting of the bundle ${\mathcal{N}}_{\Lambda}={\mathcal{N}}_{\Lambda}^-\oplus{\mathcal{N}}_{\Lambda}^+$ is an $m$-\textit{dominated splitting} for the transversal linear Poincar\'{e} flow if it is continuous,  $\Phi_H^t$-invariant and there is a constant $\theta\in(0,1)$ such that
\begin{displaymath}
\dfrac{\left\|\Phi_H^m(x)|_{{\mathcal{N}}_x^-}\right\|}{\left\|\Phi_H^m(x)|_{{\mathcal{N}}_{x}^+}\right\|}\leq\theta, \: \: x\in \Lambda.
\end{displaymath}
We call ${\mathcal{N}}_{\Lambda}={\mathcal{N}}_{\Lambda}^-\oplus{\mathcal{N}}_{\Lambda}^+$ a dominated splitting if it is $m$-dominated for some $m\in \mathbb{N}$.
\end{definition}

Let us now present some useful properties of a dominated splitting on $\Lambda$. For more details see \cite{BDV}.
\begin{itemize}
	\item \textit{Uniqueness}: the dominated splitting is unique if one fixes the dimension of the subbundles. So, due to our low dimensional assumption and to the Symplectic Eigenvalue Theorem (Theorem \ref{sympeigen}), the decomposition is unique.

\item\textit{Continuity}: every dominated splitting is continuous, i.e. the subbundles $\mathcal{N}_x^-$ and $\mathcal{N}_x^+$ depend continuously on the point $x$.

\item\textit{Transversality}: the angles between $\mathcal{N}^-$ and $\mathcal{N}^+$ are bounded away from zero on $\Lambda$.

\end{itemize}

Ahead, Lemma \ref{lemasing} will show how we can reach, under some conditions, hyperbolicity from the dominated splitting.

\end{subsection}


\end{section}


\begin{section}{Some main tools}\label{tools}

\begin{subsection}{Perturbation lemmas}

Next lemma is a version of the Clo\-sing Lemma, which can be easily obtained combining  Arnaud's Clo\-sing Lemma (\cite{Ar}) with Pugh and Robinson's Closing Lemma for Hamiltonians (\cite{PughRob}). It states that the orbit of a non-wandering point can be approximated for a very long time by a closed orbit of a nearby Hamiltonian.

\begin{lemma} \label{closing} Take $H_1 \in C^2(M,\mathbb{R})$, a non-wandering point $x\in M$ and $\epsilon$, $r$, $\tau >0$. Then we can find $H_2\in C^2(M,\mathbb{R})$, a closed orbit $\Gamma$ of $H_2$ with  period $\pi$, $p\in \Gamma$ and a map $g:[0,\tau]\rightarrow [0,\pi]$ close to the identity such that:
\begin{itemize}
	\item $H_2$ is $\epsilon$-$C^2$-close to $H_1$,
	\item $dist\Bigl(X^t_{H_1}(x), X^{g(t)}_{H_2}(p)\Bigr)<r$, $0\leq t\leq \tau$,
	\item $H_1=H_2$ on $M\backslash A$ where $A= \bigcup_{0\leq t\leq \tau}\Bigl( B_r\big(X^t_{H_1}(p)\big)\Bigr)$.
\end{itemize}
\end{lemma}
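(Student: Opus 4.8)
The plan is to derive the statement by combining the two cited closing lemmas: the Pugh--Robinson Closing Lemma for Hamiltonians (\cite{PughRob}), which guarantees that the perturbation needed to create a closed orbit can be realized \emph{inside} the Hamiltonian class (i.e.\ by passing from $H_1$ to a nearby $H_2\in C^2(M,\mathbb{R})$ while preserving the symplectic structure), and Arnaud's $C^1$ Closing Lemma (\cite{Ar}), which supplies the quantitative localization of the perturbation, the long-time shadowing, and the control on the time reparametrization $g$. First I would fix the orbit segment $X_{H_1}^{[0,\tau]}(x)$ and use the hypothesis that $x$ is non-wandering: for any prescribed scale this furnishes a point whose $X_{H_1}^t$-orbit starts within distance $r/2$ of $x$ and returns within distance $r/2$ of $x$ after some time $T\geq\tau$. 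Passing to a transversal section $\Sigma$ at $x$ and the associated flowbox $\mathcal{F}(x)$, this near-recurrence becomes a near-periodic point of the (symplectic) Poincar\'e return map, which is the object the closing argument acts upon.

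Next I would carry out the closing in Darboux coordinates adapted to the flowbox. The perturbation of the return map that is needed to close up the near-recurrent orbit is realized, following \cite{PughRob}, as $H_2=H_1+h$, where $h$ is a $C^2$-small bump supported in a tube of radius strictly less than $r$ around $X_{H_1}^{[0,T]}(x)$. This single construction delivers two of the three conclusions at once: the smallness of $h$ gives the $\epsilon$-$C^2$-closeness of $H_2$ to $H_1$, and the support of $h$ being contained in the tube gives $H_1=H_2$ on $M\setminus A$, since that tube lies inside $A=\bigcup_{0\leq t\leq\tau}B_r\bigl(X_{H_1}^t(x)\bigr)$. The resulting closed orbit $\Gamma$ of $X_{H_2}^t$ passes through a point $p$ close to $x$. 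For the remaining conclusion, Arnaud's refinement provides the shadowing and the time change: because the perturbation is localized and $C^1$-small, the orbit of $X_{H_2}^t$ through $p$ stays within distance $r$ of $X_{H_1}^t(x)$ along the whole interval $[0,\tau]$, up to a reparametrization $g:[0,\tau]\to[0,\pi]$ that is $C^0$-close to the identity, the speed mismatch being controlled by $\|X_{H_2}-X_{H_1}\|$ and hence by $\epsilon$.

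The main obstacle is the compatibility of the two frameworks. Arnaud's perturbation scheme is phrased for general $C^1$ vector fields and diffeomorphisms, whereas here every admissible perturbation must arise from a genuine Hamiltonian, preserving both the symplectic form $\omega$ and the energy-level foliation; moreover the perturbation must be measured in the $C^2$ topology on $H$ rather than the $C^1$ topology on $X_H$. I would handle this exactly in the spirit of \cite{PughRob}: by performing the closing entirely in Darboux charts so that the supporting bump is \emph{added to the Hamiltonian function itself}, which automatically keeps $X_{H_2}=J\cdot d H_2$ Hamiltonian and tangent to the energy surfaces, while still retaining Arnaud's localization and his estimate on the reparametrization. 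The only care required is translating Arnaud's flow-time control, expressed through $\|X_{H_2}-X_{H_1}\|$, back into the $C^2$-size of $h$ via the relation $D_pX_H=J\cdot D_p^2H$, which is where the $C^2$-closeness of Hamiltonians feeds into the $C^1$-closeness of the vector fields used by the abstract closing argument.
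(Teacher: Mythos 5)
Your proposal is correct and takes essentially the same route as the paper, which offers no proof of Lemma~\ref{closing} beyond asserting that it follows by combining Arnaud's closing lemma with the Pugh--Robinson closing lemma for Hamiltonians --- exactly the division of labour you implement (Pugh--Robinson to realize the perturbation inside the Hamiltonian class, with the $C^2$ topology on $H$ feeding the $C^1$ control on $X_H$ via $D_pX_H=J\cdot D_p^2H$; Arnaud for the long-time shadowing, the localization, and the reparametrization $g$). One minor imprecision: the support of the bump should be localized in a small flowbox near the recurrence point, hence inside $B_r(p)\subset A$, rather than along the whole tube around $X_{H_1}^{[0,T]}(x)$, since when the return time $T$ exceeds $\tau$ that tube need not be contained in $A=\bigcup_{0\leq t\leq \tau}B_r\bigl(X_{H_1}^t(p)\bigr)$.
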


Next we present a result of Vivier which is a version of Franks' lemma for Hamiltonians (see~\cite{V2}). Roughly, it says that we can realize a Hamiltonian corresponding to a given perturbation of the transversal linear Poincar\'{e} flow.

\begin{lemma}\label{Frank}  Take $H_1 \in C^2(M,\mathbb{R})$, $\epsilon$, $\tau >0$ and $x\in M$. Then, there exists $\delta>0$ such that for any flowbox $V$ of an injective arc of orbit $X_{H_1}^{[0,t]}(x)$, $t\geq \tau$, and a transversal symplectic $\delta$-perturbation $F$ of $\Phi_{{H_1}}^t(x)$, there is $H_2\in C^2(M,\mathbb{R})$ satisfying:
\begin{itemize}
	\item $H_2$ is $\epsilon$-$C^2$-close to $H_1$,
	\item $\Phi_{{H_2}}^t(x)=F$,
	\item $H_1=H_2$ on $X_{H_1}^{[0,t]}(x)\cup (M\backslash V)$.
\end{itemize}
\end{lemma}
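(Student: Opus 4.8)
The plan is to realize the prescribed perturbation $F$ of the transversal linear Poincar\'{e} flow by adding to $H_1$ a function that is \emph{quadratic in the transversal directions and localized along the orbit arc}, exploiting the identity $D_pX_H=J\cdot D^2_pH$, which says that an infinitesimal change of the Hamiltonian acts on the transversal linearized flow through the \emph{Hessian} of the perturbation. First I would fix the flowbox $V$ of the injective arc $X_{H_1}^{[0,t]}(x)$ and choose adapted symplectic (Darboux) coordinates $(s,z)$ on it, with $s$ the coordinate along the flow direction, $z\in\mathbb{R}^{2}$ a pair of symplectic coordinates on the normal sections, and the orbit equal to $\{z=0\}$. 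In these coordinates the transversal linear Poincar\'{e} flow $\Phi_{H_1}^s(x)$ is the time-ordered exponential of an $\mathfrak{sp}(2,\mathbb{R})$-valued cocycle $a(s)$ obtained from the transversal part of $J\,D^2H_1$ along the orbit, and the target may be replaced by the correction $A=(\Phi_{H_1}^t(x))^{-1}\circ F$, a linear symplectomorphism $\delta$-close to the identity.

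Next I would write the perturbation explicitly as $h(s,z)=\rho(s)\,q(z)\,\chi(z)$, where $q$ is a quadratic form on $\mathbb{R}^2$ to be chosen, $\rho$ is a bump in the $s$-variable supported in $(0,t)$ with $\int_0^t\rho=1$, and $\chi$ is a transversal cutoff equal to $1$ near $z=0$ and vanishing outside the transversal extent of $V$. Because $h$ is quadratic in $z$ near the orbit, both $h$ and $d h$ vanish on $\{z=0\}$; hence $X_{H_2}=X_{H_1}$ on the orbit and $H_2=H_1$ on the arc, while $\mathrm{supp}(h)\subset V$ gives $H_1=H_2$ outside $V$, which are the last two required properties. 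Along the orbit the Hessian of $h$ is exactly $\rho(s)\,Q$, with $Q$ the matrix of $q$, so adding $h$ perturbs the transversal generator from $a(s)$ to $a(s)+\rho(s)\,JQ$, that is, it inserts a prescribed element $JQ\in\mathfrak{sp}(2,\mathbb{R})$ distributed according to $\rho$.

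It then remains to hit $A$ \emph{exactly}. Duhamel's formula gives that the derivative at $Q=0$ of the realization map $Q\mapsto \Phi_{H_1+h}^t(x)$ equals, after right translation by $\Phi_{H_1}^t(x)$, the linear map $Q\mapsto\int_0^t\rho(s)\,\mathrm{Ad}_{(\Phi_{H_1}^s)^{-1}}(JQ)\,ds$ from symmetric matrices to $\mathfrak{sp}(2,\mathbb{R})$. This map is onto: $J$ sends symmetric matrices isomorphically onto $\mathfrak{sp}(2,\mathbb{R})$, and $\mathrm{Ad}$ by a symplectic map preserves $\mathfrak{sp}(2,\mathbb{R})$, so for a sufficiently concentrated $\rho$ (or a bounded combination of bumps) the integrated map is a linear isomorphism. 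The inverse function theorem then produces, for every $F$ with $\delta$ small enough, a unique small $Q$ with $\|Q\|\lesssim\delta$ realizing $\Phi_{H_2}^t(x)=F$. For the size estimate, the transversal cutoff scales favourably: if $V$ has transversal radius $w$ then $\|q\,\chi\|_{C^2}\lesssim\|Q\|$ \emph{uniformly in} $w$ (the three product-rule terms are each $O(\|Q\|)$), while $\|\rho\|_{C^2}\lesssim\tau^{-3}$ because the arc has length $t\ge\tau$. Hence $\|h\|_{C^2}\lesssim \delta\,\tau^{-3}$, and choosing $\delta\sim\epsilon\,\tau^3$ makes $H_2=H_1+h$ be $\epsilon$-$C^2$-close to $H_1$, with $\delta$ depending only on $H_1,x,\epsilon,\tau$ and uniform over all flowboxes $V$.

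The main obstacle is this last combination: obtaining an \emph{exact} realization, not merely to first order, while keeping $\delta$ uniform. The exactness rests on the surjectivity of the infinitesimal realization map, which is where the symplectic structure is essential, through $J\cdot\mathrm{Sym}=\mathfrak{sp}$ and $\mathrm{Ad}_{Sp}\mathfrak{sp}=\mathfrak{sp}$; without the hypothesis $t\ge\tau$ the factor $\|\rho\|_{C^2}$ would blow up and no uniform $\delta$ could be extracted, which is precisely why the lower bound on the length of the arc is imposed. A secondary technical point, handled by the favourable scaling above, is that the transversal width of $V$ is not controlled, so one must check that the cutoff does not inflate the $C^2$-norm.
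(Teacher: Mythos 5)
Your overall route is sound, but note first that the paper contains no proof of this lemma to compare against: it is stated as quoted from Vivier \cite{V2}. Your construction --- adapted Darboux coordinates along the arc, a perturbing Hamiltonian quadratic in the transversal variables so that the cutoff costs nothing in $C^2$, the identification $J\cdot\mathrm{Sym}(2)=\mathfrak{sp}(2,\mathbb{R})$, and an inverse-function-theorem step upgrading first-order to exact realization --- is essentially the standard proof of the Hamiltonian Franks lemma and the route taken in \cite{V2}.

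There is, however, a genuine gap in your handling of $\rho$: the two requirements you impose on it are incompatible as written, and the version you rely on for the size estimate is false in general. You need $L_\rho(Q)=\int_0^t\rho(s)\,\mathrm{Ad}_{(\Phi_{H_1}^s)^{-1}}(JQ)\,ds$ to be invertible, for which you invoke ``sufficiently concentrated $\rho$'', while the bound $\|\rho\|_{C^2}\lesssim\tau^{-3}$ forces $\rho$ to be spread over a length comparable to $\tau$. Spreading can kill invertibility: if the transversal cocycle is an elliptic rotation, $\Phi_{H_1}^s=R_s$, then $\mathrm{Ad}_{R_{-s}}$ fixes $J$ and rotates the symmetric--traceless plane of $\mathfrak{sl}(2,\mathbb{R})$ through angle $2s$, so for $\tau$ a multiple of $\pi$ and $\rho\equiv\tau^{-1}$ the average annihilates that plane and $L_\rho$ has a two-dimensional kernel. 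The repair should be stated: support $\rho$ in an initial interval of length $\ell=\min\{\tau,\ell_0\}$, where $\ell_0=\ell_0(H_1)$ is so small that $\mathrm{Ad}_{(\Phi_{H_1}^s)^{-1}}$ stays near the identity for $0\le s\le\ell_0$; then $L_\rho$ is a small perturbation of the isomorphism $Q\mapsto JQ$, hence invertible with inverse bounded by a constant of $H_1$ alone, and $\|\rho\|_{C^2}\lesssim\ell^{-3}$ still depends only on $(H_1,\tau)$. This placement also buys the uniformity in $t$ that your write-up never actually establishes: since $H_2=H_1$ on the region $s>\ell$, one has $(\Phi_{H_1}^t)^{-1}\circ\Phi_{H_2}^t(x)=(\Phi_{H_1}^{\ell})^{-1}\circ\Phi_{H_2}^{\ell}(x)$, so the realization map and the IFT constants do not see $t$ at all. (Relatedly, for \emph{any} proof to yield $\delta$ uniform over $t\ge\tau$, the undefined phrase ``$\delta$-perturbation of $\Phi_{H_1}^t(x)$'' must be read multiplicatively, $F=\Phi_{H_1}^t(x)\circ E$ with $\|E-\mathrm{Id}\|<\delta$: under the additive reading $\|F-\Phi_{H_1}^t(x)\|<\delta$, your IFT target $(\Phi_{H_1}^t)^{-1}F$ is only $\delta\,\|(\Phi_{H_1}^t)^{-1}\|$-close to the identity, which is unbounded in $t$.) A smaller slip: $M$ is four-dimensional, so the flowbox coordinates are $(s,e,z)$ with $z\in\mathbb{R}^2$, not $(s,z)$; the perturbation must also be cut off in the energy variable $e$, and since $q$ is not quadratic in $e$ your favourable-scaling argument does not control that cutoff --- instead take the $z$-support small compared with the $e$-width of $V$, which is legitimate because only $\mathrm{supp}(h)\subset V$ is required.
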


\end{subsection}


\begin{subsection}{Auxiliary results}

This section presents several useful results that are going to be applied in the proof of the main theorem. 

In the presence of a weakly hyperbolic periodic orbit, the next two lemmas give us conditions  to create a nearby elliptic closed orbit via a small perturbation. These results can be easily obtained by combining the techniques developed in Lemma~\ref{Frank} with the arguments in ~\cite{MBPD}. 

\begin{lemma} (\cite[Proposition 3.8]{MBPD})\label{prop1}
Let $H\in C^s(M,\mathbb{R})$, $2\leq s\leq \infty$, and $\epsilon>0$. There is $\theta>0$ such that for any closed hyperbolic orbit $\Gamma$ with  period $\tau>1$ and angle between $\mathcal{N}_q^+$ and $\mathcal{N}_q^-$ smaller than $\theta$, for $q\in \Gamma$, there is $\tilde{H}\in C^{\infty}(M,\mathbb{R})$ $\epsilon$-$C^2$-closed to $H$ for which $\Gamma$ is elliptic with  period $\tau$.
\end{lemma}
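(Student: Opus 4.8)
The plan is to reduce the statement to a single linear-algebra fact about the transversal return map and then to realize the corresponding perturbation by Vivier's Franks-type lemma (Lemma~\ref{Frank}). Since $\dim M=4$, for $q\in\Gamma$ the normal fibre $\mathcal{N}_q$ is two-dimensional and the return map $A:=\Phi_H^{\tau}(q)$ is a linear symplectomorphism of $(\mathcal{N}_q,\omega)$, i.e. $A\in Sp(2,\mathbb{R})=SL(2,\mathbb{R})$. Its type is governed by the trace: $A$ is hyperbolic iff $|\operatorname{tr}A|>2$ and elliptic iff $|\operatorname{tr}A|<2$, in which case the two eigenvalues are $e^{\pm i\phi}$, hence simple, non-real and of modulus one (matching the paper's definition). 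By hypothesis $A$ is hyperbolic, with real eigenvalues $\lambda,1/\lambda$ ($\lambda>1$) along the directions $\mathcal{N}_q^+,\mathcal{N}_q^-$ spanning an angle $\alpha<\theta$.

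I would first compute $A$ in an orthonormal symplectic basis $\{e_1,e_2\}$ of $\mathcal{N}_q$ with $e_1\in\mathcal{N}_q^-$ and $\mathcal{N}_q^+=\cos\alpha\,e_1+\sin\alpha\,e_2$. A direct computation gives
\[
A=\begin{pmatrix} 1/\lambda & \cot\alpha\,(\lambda-1/\lambda)\\[2pt] 0 & \lambda\end{pmatrix},
\]
so that the off-diagonal (shear) entry blows up like $\cot\alpha$ as $\alpha\to 0$. The point is to exploit this large shear by composing $A$ with a small symplectic rotation $R_\psi\in SO(2)\subset Sp(2,\mathbb{R})$: one finds
\[
\operatorname{tr}(R_\psi A)=\cos\psi\,(\lambda+1/\lambda)+\sin\psi\,\cot\alpha\,(\lambda-1/\lambda).
\]
Taking $\psi<0$ the trace decreases, and a short estimate shows $\operatorname{tr}(R_\psi A)<2$ already for $|\psi|\lesssim\tan\alpha\cdot\frac{\sqrt\lambda-1/\sqrt\lambda}{\sqrt\lambda+1/\sqrt\lambda}<\tan\alpha$. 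Crucially, this bound is \emph{uniform in $\lambda$ and in the period $\tau$}. Hence there is $\psi$ with $|\psi|=O(\alpha)$ for which $F:=R_\psi A$ is strictly elliptic, while $\|R_\psi-\mathrm{Id}\|=O(\alpha)$.

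Finally I would feed this back into the dynamics. Given $\epsilon>0$, Lemma~\ref{Frank} supplies $\delta>0$ such that any admissible transversal symplectic $\delta$-perturbation of $\Phi_H^{\tau}(q)$ — realized by inserting a near-identity symplectic factor in a flowbox along the arc, which is legitimate since $\tau>1$ gives a definite arc length — is achieved by an $\epsilon$-$C^2$-close Hamiltonian that coincides with $H$ on $X_H^{[0,\tau]}(q)$, leaving the orbit $\Gamma$ and its period $\tau$ unchanged. It then suffices to choose $\theta$ so small that $\alpha<\theta$ forces $R_\psi$ to be a $\delta$-perturbation of the identity; the resulting $\tilde H$ has $\Phi_{\tilde H}^{\tau}(q)=F$ elliptic, so $\Gamma$ is an elliptic closed orbit of $\tilde H$ with period $\tau$. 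A standard $C^2$-density argument, using that ellipticity is an open condition, lets us take $\tilde H\in C^\infty$.

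The main obstacle is precisely what makes the statement nontrivial: turning a hyperbolic $SL(2,\mathbb{R})$ return map into an elliptic one requires lowering its trace from $\lambda+1/\lambda$ (possibly very large) below $2$, which is \emph{impossible} by an additive perturbation of size independent of $\lambda$, since the trace is $2$-Lipschitz in the matrix. The resolution is that the effective perturbation is \emph{multiplicative} — composition with a rotation — and its size is controlled not by the eigenvalue but by the angle $\alpha$ between the invariant directions. Establishing this $\lambda$- and $\tau$-uniform control of the rotation angle, and checking that such a composition is an admissible perturbation in Lemma~\ref{Frank} so that it translates into a genuine $\epsilon$-$C^2$ perturbation of $H$, is where the real work lies; the remaining arguments follow \cite{MBPD}.
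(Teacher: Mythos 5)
Your proposal is correct and takes essentially the same route as the paper, which proves this lemma simply by invoking \cite[Proposition 3.8]{MBPD} combined with the realization techniques of Lemma~\ref{Frank}: your small-angle rotation mechanism (a rotation of size $O(\alpha)$, uniform in $\lambda$ and $\tau$ --- indeed $\psi=-\alpha$ gives $\operatorname{tr}(R_{-\alpha}A)=2\cos\alpha/\lambda\in(0,2)$ outright) is precisely the content of that proposition. Your insertion of the near-identity symplectic factor along a time-one injective sub-arc, with $\theta$ chosen so that $\|R_\psi\Phi_H^1-\Phi_H^1\|\leq\|R_\psi-\mathrm{Id}\|\,\sup_x\|\Phi_H^1(x)\|<\delta$, matches exactly how the paper itself deploys Lemma~\ref{Frank} (cf.\ the proof of Theorem~\ref{ssA}).
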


\begin{lemma} (\cite[Proposition 3.10]{MBPD})\label{prop2}
Let $H\in C^s(M,\mathbb{R})$, $2\leq s\leq \infty$, and $\epsilon, \theta>0$. There exist $m,T\in\mathbb{N}$ $(T>>m)$ such that, if a hyperbolic closed orbit $\Gamma$ with  period $\tau>T$ satisfies:
\begin{itemize}
	\item angle between $\mathcal{N}_q^+$ and $\mathcal{N}_q^-$ is grater or equal than $\theta$ for all $q\in\Gamma$,
	\item $\Gamma$ has no $m$-dominated splitting,
\end{itemize}
then there exists $\tilde{H}\in C^{\infty}(M,\mathbb{R})$ $\epsilon$-$C^2$-closed to $H$ for which $\Gamma$ is elliptic with  period $\tau$.
\end{lemma}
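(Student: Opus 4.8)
The plan is to reduce the statement to a perturbation problem for the two-dimensional symplectic cocycle given by the transversal linear Poincar\'{e} flow $\Phi_H^t$ over the periodic orbit $\Gamma$, and then to collapse the angle between the invariant directions so that Lemma~\ref{prop1} can be applied. Fix a point $p\in\Gamma$ and trivialize the normal bundle $\mathcal{N}$ along $\Gamma$ by an orthonormal symplectic frame adapted to the hyperbolic splitting $\mathcal{N}^+\oplus\mathcal{N}^-$; in these coordinates the return map $\Phi_H^\tau(p)$ is a hyperbolic element of $SL(2,\mathbb{R})$, and the cocycle factors as a product of the time-$m$ transition maps $A_i=\Phi_H^m(q_i)$ along a chain of $N\approx\tau/m$ points $q_i\in\Gamma$. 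I would split the allowed size as $\epsilon/2+\epsilon/2$, reserving the first half to decrease the angle and the second half for the ellipticization furnished by Lemma~\ref{prop1}. Each elementary perturbation will be a small symplectic rotation inserted in a flowbox of one of the arcs $X_H^{[0,m]}(q_i)$ and realized as a genuine Hamiltonian through Vivier's version of Franks' lemma (Lemma~\ref{Frank}); since those perturbations are supported off $\Gamma$ and leave $H$ untouched on the orbit, $\Gamma$ persists as a closed orbit with the same period $\tau$ throughout.

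The core of the argument is a Bochi-type rotation step. The two integers $m$ and $T$ are chosen first, depending only on $H$, $\epsilon$ and $\theta$, with $T\gg m$. The failure of the $m$-dominated splitting hypothesis means that along $\Gamma$ one cannot bound the ratio $\|\Phi_H^m(q)|_{\mathcal{N}^-}\|/\|\Phi_H^m(q)|_{\mathcal{N}^+}\|$ uniformly below $1$; equivalently, there are points $q_i$ at which the contraction and the expansion over the window of length $m$ are comparable up to a factor controlled by $m$. At such a point the transition map $A_i$ is almost conformal, so a rotation of definite angle, of size at most the $\delta$ allowed by Lemma~\ref{Frank}, tilts the image of the expanding direction towards the contracting one without being undone by the hyperbolicity. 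The standing lower bound $\theta$ on the angle between $\mathcal{N}^+$ and $\mathcal{N}^-$ guarantees that these rotations remain effective and uniformly controlled, the two directions never being nearly parallel. Because $\tau>T\gg m$ there is a definite number of such windows, and by composing the corresponding $\delta$-rotations I can accumulate a net interpolation that drives the angle between $\mathcal{N}^+_q$ and $\mathcal{N}^-_q$, at some $q\in\Gamma$, below the threshold of Lemma~\ref{prop1}; the perturbed orbit stays hyperbolic with period $\tau$, and the total cost is at most $\epsilon/2$.

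Once the angle has been made smaller than the threshold of Lemma~\ref{prop1}, I apply that lemma with the remaining budget $\epsilon/2$ to the already perturbed Hamiltonian, obtaining $\tilde{H}\in C^{\infty}(M,\mathbb{R})$ that is $\epsilon$-$C^2$-close to $H$ and for which $\Gamma$ is elliptic with the same period $\tau$. Since all the perturbations are supported in flowboxes disjoint from $\Gamma$ and agree with the previous Hamiltonian on the orbit, $\Gamma$ survives as a closed orbit at every stage, and its two characteristic multipliers are turned into a simple non-real pair on the unit circle, as required.

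The main obstacle is the quantitative estimate behind the second paragraph: making precise how the absence of an $m$-dominated splitting, together with the angle lower bound $\theta$ and the long period, provides enough effective rotation to collapse the angle while each elementary perturbation stays within the $\delta$ of Lemma~\ref{Frank}. This forces the choice of $m$ large enough that the comparability of the rates at the bad windows is sharp, and of $T$ large enough that such windows are numerous, all while respecting the area-preserving constraint on every transition map and ensuring that the outcome is genuinely elliptic rather than parabolic. By contrast, the reduction to the cocycle, the realization of the perturbations by Lemma~\ref{Frank}, and the final invocation of Lemma~\ref{prop1} are comparatively routine.
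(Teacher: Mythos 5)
Your overall scaffolding --- reducing to the two-dimensional symplectic cocycle $\Phi_H^t$ along $\Gamma$, realizing every rotation as a genuine Hamiltonian perturbation through Lemma~\ref{Frank} in flowboxes so that $\Gamma$ persists with its period $\tau$, splitting the budget $\epsilon/2+\epsilon/2$, and concluding via Lemma~\ref{prop1} --- is exactly the combination the paper has in mind (the paper does not reprove this lemma; it cites \cite[Proposition 3.10]{MBPD} and indicates that one merges those arguments with Lemma~\ref{Frank}). The genuine gap is in the step you yourself flag as ``the main obstacle'', and it is not merely a missing estimate: the mechanism you describe is wrong. The hypothesis that $\Gamma$ has no $m$-dominated splitting yields only (at least) \emph{one} point $q\in\Gamma$ at which $\left\|\Phi_H^m(q)|_{\mathcal{N}_q^-}\right\|$ is comparable to $\left\|\Phi_H^m(q)|_{\mathcal{N}_q^+}\right\|$; it says nothing about the remaining windows, where the splitting may be strongly dominated. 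Your plan to ``accumulate a net interpolation'' by inserting $\delta$-rotations in many windows, with $\tau>T\gg m$ invoked to supply ``a definite number of such windows'', therefore fails: in a dominated window a tilt of size $\delta$ is exponentially crushed by the projective dynamics before the next rotation can reinforce it, so rotations spread along the orbit do not add up. In the Ma\~n\'e--Bochi scheme underlying \cite[Proposition 3.10]{MBPD}, \emph{all} the rotations are performed inside the single non-dominated window of length $m$, and $m=m(\epsilon,\theta)$ is produced by a dichotomy: either time-one rotations of size $\delta(\epsilon)$ within that window suffice to carry $\mathcal{N}^+$ onto $\mathcal{N}^-$ (the standing angle bound $\theta$ keeping each rotation effective), or the impossibility of the exchange forces an $m$-domination estimate at $q$, contradicting the choice of the window. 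That dichotomy, not an abundance of bad windows, is the whole quantitative content of the lemma, and it is absent from your argument. Correspondingly, the role of $T\gg m$ is not to create bad windows: it guarantees the period is long enough that the length-$m$ perturbation arc is injective and its flowboxes avoid the returns of $\Gamma$, so that Lemma~\ref{Frank} applies with a single $\delta=\delta(\epsilon)$ and uniform constants.

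Two smaller remarks. First, once the exchange $\mathcal{N}^+\mapsto\mathcal{N}^-$, $\mathcal{N}^-\mapsto\mathcal{N}^+$ is achieved over one window, the return map over period $\tau$ is off-diagonal in the adapted frame, hence has trace zero and determinant one and is already elliptic (eigenvalues $\pm i$); alternatively, stopping the rotation before the full exchange collapses the angle below the threshold of Lemma~\ref{prop1}, which is your route and is legitimate --- provided you justify that the threshold $\theta(\epsilon/2,\cdot)$ of Lemma~\ref{prop1} can be taken uniform over the $\epsilon$-ball of Hamiltonians around $H$, since you apply it to the already perturbed Hamiltonian rather than to $H$ itself. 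Second, your phrase ``there are points $q_i$ at which the contraction and the expansion \ldots are comparable'' overstates the hypothesis for the same reason as above: only one such window is guaranteed, and the proof must be engineered to need only one.
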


Conversely, the absence of elliptic periodic orbits for all nearby perturbations implies uniform bounds on hyperbolic orbits with big enough period. This is an easy consequence of the two previous lemmas.

\begin{lemma}\label{lemateta}
Let $H\in C^s(M,\mathbb{R})$, $2\leq s\leq \infty$, and $\epsilon>0$. Set $\theta=\theta(\epsilon,H)$, $m=m(\epsilon, \theta)$ and $T=T(m)$ given by Lemmas \ref{prop1} and \ref{prop2}. Assume that every Hamiltonian $\tilde{H}$ which is $\epsilon$-$C^2$-close to $H$ do not admit elliptic closed orbits. Then, for every such $\tilde{H}$, all closed orbits with period larger that $T$ are hyperbolic, $m$-dominated and with angle between its stable and unstable directions bounded from bellow by $\theta$.
\end{lemma}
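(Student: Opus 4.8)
The plan is to prove the statement by contradiction, reading each of Lemmas \ref{prop1} and \ref{prop2} in contrapositive form and invoking the standing hypothesis that no $\epsilon$-$C^2$-close Hamiltonian carries an elliptic closed orbit. A preliminary point, used tacitly throughout, is that the constants $\theta=\theta(\epsilon,H)$, $m=m(\epsilon,\theta)$ and $T=T(m)$ supplied by those two lemmas depend only on $\epsilon$ and on uniform $C^1$-bounds for the Hamiltonian vector field, which are uniform over the $C^2$-neighbourhood of $H$ of radius $\epsilon$; hence the same three constants may be taken to serve as the constants associated by Lemmas \ref{prop1} and \ref{prop2} to every $\tilde H$ in that neighbourhood. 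I would then fix such a $\tilde H$ and a closed orbit $\Gamma$ of $\tilde H$ of period $\tau>T$, and prove successively that $\Gamma$ is hyperbolic, that its invariant directions make an angle at least $\theta$, and that it carries an $m$-dominated splitting.

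First I would establish hyperbolicity. Since $\Phi_{\tilde H}^{\tau}(p)$ is a linear symplectomorphism of the two-dimensional fibre $\mathcal{N}_p$, the two characteristic multipliers of $\Gamma$ have product $1$, so $\Gamma$ is hyperbolic, parabolic or elliptic. The elliptic case is excluded at once by hypothesis. If $\Gamma$ were parabolic, its return map would have a double real multiplier of modulus $1$, i.e.\ trace $\pm 2$; an arbitrarily small symplectic perturbation lowers the trace into the interval $(-2,2)$, moving the multipliers to a non-real complex-conjugate pair on the unit circle. By Lemma \ref{Frank} such a perturbation of $\Phi_{\tilde H}^{\tau}(p)$ is realised by a Hamiltonian arbitrarily $C^2$-close to $\tilde H$, hence still $\epsilon$-$C^2$-close to $H$, for which $\Gamma$ is elliptic; this contradicts the hypothesis. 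Therefore $\Gamma$ is hyperbolic.

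With hyperbolicity in hand I would control the angle. As $\tau>T>1$ and $\Gamma$ is hyperbolic, Lemma \ref{prop1} (with base Hamiltonian $\tilde H$) asserts that if the angle between $\mathcal{N}_q^+$ and $\mathcal{N}_q^-$ were smaller than $\theta$ at some $q\in\Gamma$, then $\Gamma$ could be made elliptic by an $\epsilon$-$C^2$-perturbation, contradicting the hypothesis; hence this angle is at least $\theta$ at every $q\in\Gamma$. Finally, since $\Gamma$ is hyperbolic, $\tau>T$, and the angle is bounded below by $\theta$ along $\Gamma$, all the hypotheses of Lemma \ref{prop2} are met except possibly the absence of an $m$-dominated splitting; were that splitting absent, Lemma \ref{prop2} would again yield a nearby elliptic orbit, which is impossible. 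Thus $\Gamma$ admits an $m$-dominated splitting, and the proof is complete.

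I expect the only genuine subtlety to be the neighbourhood bookkeeping together with the uniformity of $\theta,m,T$: one must ensure that every auxiliary Hamiltonian produced to create an elliptic orbit remains $\epsilon$-$C^2$-close to $H$, which is handled by the uniformity of the constants over the neighbourhood and, where needed, by working with $\tilde H$ in a slightly smaller neighbourhood so that the perturbations of Lemmas \ref{prop1}, \ref{prop2} and \ref{Frank} stay within the prescribed ball. Apart from this, the argument is a direct contrapositive reading of the two previous lemmas; the only structural constraint is the logical order hyperbolic $\Rightarrow$ angle $\Rightarrow$ domination, imposed because Lemma \ref{prop1} presupposes hyperbolicity and Lemma \ref{prop2} presupposes both hyperbolicity and the lower angle bound.
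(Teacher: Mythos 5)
Your proof is correct and matches the paper's intended argument: the paper gives no details, presenting Lemma \ref{lemateta} as an easy consequence of the two previous lemmas, i.e.\ precisely the contrapositive reading of Lemmas \ref{prop1} and \ref{prop2} that you carry out in the order hyperbolic $\Rightarrow$ angle $\Rightarrow$ domination. Your explicit exclusion of the parabolic case via a trace perturbation realised by Lemma \ref{Frank}, and your remarks on the uniformity of $\theta$, $m$, $T$ over the $\epsilon$-neighbourhood, merely fill in bookkeeping the paper leaves implicit and do not constitute a different route.
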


\bigskip
In order to close the section, we present a result which is important because we are going to appeal to the techniques involved in its proof.

\begin{theorem} (\cite[Theorem 2]{MBJLD})\label{bessa2} There exists a $C^2$-dense subset $\mathcal{D}$ of $C^2(M,\mathbb{R})$ such that, if $H\in\mathcal{D}$, there exists an invariant decomposition $M=D\cup Z$, mod $0$, satisfying:
\begin{itemize}
	\item $D=\cup_{n\in\mathbb{N}}D_{m_n}$, where $D_{m_n}$ is a set with $m_n$-dominated splitting for $\Phi^t_{H}$,
	\item $X^t_H$ has zero Lyapunov exponents for $p\in Z$.
\end{itemize}
\end{theorem}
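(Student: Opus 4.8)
The plan is to follow the Mañé--Bochi strategy for establishing a dichotomy between dominated splitting and vanishing Lyapunov exponents, now adapted to the two-dimensional symplectic cocycle given by the transversal linear Poincar\'e flow $\Phi_H^t$. First I would introduce the integrated upper Lyapunov exponent
\[
\mathcal{LE}(H)=\int_M \lambda^+(H,x)\,d\mu(x),
\]
which is well defined and nonnegative by Oseledets' Theorem, and observe that on the two-dimensional fibre $\mathcal{N}_x$ the two exponents of $\Phi_H^t$ add up to zero, so controlling $\lambda^+$ controls the whole Oseledets spectrum of the transversal flow. Using the subadditivity of $t\mapsto\log\|\Phi_H^t(x)\|$ together with the continuity of $H\mapsto X_H$ in the $C^2$ topology, I would show that $\mathcal{LE}$ is upper semicontinuous. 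A standard Baire-category argument then yields that the set $\mathcal{D}$ of continuity points of $\mathcal{LE}$ is residual, and in particular $C^2$-dense in $C^2(M,\mathbb{R})$.

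Next, fix $H\in\mathcal{D}$ and consider
\[
\Gamma=\{x\in\mathcal{O}(H)\cap\mathcal{R} : \lambda^+(H,x)>0 \text{ and the orbit of } x \text{ carries no dominated splitting}\}.
\]
I would argue by contradiction, assuming $\mu(\Gamma)>0$. The core is a perturbative mechanism: along an orbit segment on which the Oseledets directions $\mathcal{N}^+$ and $\mathcal{N}^-$ are not dominated, the angle between them can be made small at suitable returns, so a small rotation mixes the expanding and contracting behaviour and decreases the growth rate. Realising such rotations as genuine perturbations of the Hamiltonian is exactly what Vivier's Franks-type Lemma~\ref{Frank} provides, since it allows one to prescribe a transversal symplectic perturbation of $\Phi_H^t$ inside a flowbox while keeping $H$ arbitrarily $C^2$-close to the original. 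Patching these local rotations over a Kakutani tower adapted to $\Gamma$, controlling return times and keeping the cumulative perturbation small, I would produce $\tilde H$ arbitrarily $C^2$-close to $H$ with $\mathcal{LE}(\tilde H)\leq\mathcal{LE}(H)-c$ for a definite $c>0$ depending on $\mu(\Gamma)$ and the average exponent over $\Gamma$. This contradicts the continuity of $\mathcal{LE}$ at $H$ (upper semicontinuity gives values near $\mathcal{LE}(H)$, the perturbation forces a drop by $c$), and hence $\mu(\Gamma)=0$.

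Finally I would assemble the decomposition. For $H\in\mathcal{D}$, discarding the $\mu$-null set $\Gamma$ together with the complement of $\mathcal{O}(H)\cap\mathcal{R}$, every remaining point either carries a dominated splitting or satisfies $\lambda^+(H,x)=0$ (whence both exponents vanish, by the symplectic symmetry of $\Phi_H^t$). Enumerating the admissible domination constants, I would set $D_{m_n}$ to be the closure of the set of points admitting an $m_n$-dominated splitting for $\Phi_H^t$ (a compact $X_H^t$-invariant set, since domination is a closed condition), let $D=\bigcup_{n\in\mathbb{N}}D_{m_n}$, and let $Z$ be the set of points with zero exponents; both are $X_H^t$-invariant because domination and Lyapunov exponents are flow-invariant. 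This yields $M=D\cup Z$ mod $0$, as claimed.

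I expect the main obstacle to be the perturbative step: extracting, from the mere failure of an $m$-dominated splitting, a globally small $C^2$-perturbation that lowers the integrated exponent by a fixed amount. The delicate points are the measurable patching over the tower, the uniform control of the perturbation size afforded by Lemma~\ref{Frank}, and the quantitative estimate linking the loss of domination to an actual reduction of $\int\lambda^+$; these constitute the technical heart of the Bochi--Mañé method transposed to the Hamiltonian category.
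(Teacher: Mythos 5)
The paper does not prove this statement at all: it is imported verbatim from \cite{MBJLD} (Theorem 2 there), so there is no internal proof to compare against. Your outline faithfully reconstructs the strategy actually used in that reference --- the Bochi--Ma\~n\'e scheme of an integrated upper exponent for $\Phi_H^t$, its upper semicontinuity via subadditivity, taking $\mathcal{D}$ to be the (residual, hence dense) continuity points, lowering the exponent by symplectic rotations realised through Vivier's Franks-type lemma (Lemma~\ref{Frank}) over a tower where domination fails, and assembling $M=D\cup Z$ mod $0$ --- and the technical heart you flag (the quantitative exponent-lowering perturbation) is exactly the part carried out in \cite{MBJLD}.
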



\end{subsection}


\end{section}


\begin{section}{Proof of the results}\label{resultproof}

\begin{subsection}{Auxiliary lemmas}
			
In this section we split the proof of Theorem~\ref{mainth} into three lemmas. The first lemma deals with conditions that ensure the existence of a dominated splitting on a given energy surface. The second lemma shows how we can  derive hyperbolicity from the existence of a dominated splitting and the third lemma deals with the case of a non-regular  surface energy, which concludes the proof of Theorem~\ref{mainth}.

\begin{lemma}\label{lemasing}
If $(H,e)\in\mathcal{G}$$^{2}(M)$ and $\mathcal{E}^\star_{H,e}$ is regular, then $\Phi_{H}^t$ admits a dominated splitting over $\mathcal{E}^\star_{H,e}$.
\end{lemma}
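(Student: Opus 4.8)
The plan is to exploit the star hypothesis through the perturbation machinery of Section~\ref{tools}, reducing the construction of the splitting to two quantitative facts available for closed orbits: a uniform domination scale at large periods and a uniform lower bound on the angle. First I would observe that, by Definition~\ref{hss}, the star property provides a $C^2$-neighbourhood $\mathcal{U}$ of $H$ and $\delta>0$ on which every closed orbit lying on the analytic continuation of $\mathcal{E}^\star_{H,e}$ is hyperbolic; in particular no $\tilde{H}\in\mathcal{U}$ can exhibit an elliptic closed orbit on the corresponding surface. This is exactly the hypothesis of Lemma~\ref{lemateta}, so, fixing $\epsilon$ small enough that the $\epsilon$-ball sits inside $\mathcal{U}$, I obtain uniform constants $\theta$, $m$, $T$ such that every closed orbit of every such $\tilde{H}$ with period larger than $T$ is hyperbolic, $m$-dominated, and has angle between $\mathcal{N}^+$ and $\mathcal{N}^-$ bounded below by $\theta$.

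Next I would argue by contradiction, assuming that $\Phi_H^t$ admits no dominated splitting over $\mathcal{E}^\star_{H,e}$. The idea is to convert this failure into a long closed orbit of a nearby Hamiltonian that violates one of the uniform estimates above. Since domination fails at every scale, along $\mathcal{E}^\star_{H,e}$ there are recurrent points whose orbit segments of arbitrarily large length either fail $m$-domination or display arbitrarily small angle between the candidate directions. Applying the Closing Lemma (Lemma~\ref{closing}) to such a recurrent point, I produce a Hamiltonian $H_2\in\mathcal{U}$ with a closed orbit $\Gamma$ whose period exceeds $T$ and which shadows the chosen segment long enough that the bad behaviour is inherited: either $\Gamma$ is not $m$-dominated, or its angle drops below $\theta$. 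Both conclusions contradict the previous paragraph, and, in the small-angle case, one may instead invoke Lemma~\ref{prop1} (or Lemma~\ref{prop2} in the absence of $m$-domination) to upgrade $\Gamma$ to an elliptic orbit of a Hamiltonian in $\mathcal{U}$, directly contradicting the star property.

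To make recurrent points with this degenerate behaviour genuinely available I would lean on Theorem~\ref{bessa2}: after replacing $H$ by a $C^2$-close Hamiltonian in the dense set $\mathcal{D}\cap\mathcal{U}$, the surface splits mod $0$ into a part $D$ carrying dominated splittings and a part $Z$ of points with vanishing Lyapunov exponents. The closing argument above shows that $Z\cap\mathcal{E}^\star_{H,e}$ must be $\mu_{\mathcal{E}^\star_{H,e}}$-negligible, for otherwise Poincar\'e recurrence would furnish a recurrent zero-exponent point to be closed into an elliptic orbit. Hence $D$ has full measure on the surface, so it is dense, and the uniform scale $m$ of the first step forces the splittings on the pieces $D_{m_n}$ to be uniformly $m$-dominated on this dense set. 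Since $m$-domination is a closed condition, the splitting extends by continuity to the whole compact surface $\mathcal{E}^\star_{H,e}$, with Szpilrajn's Theorem~\ref{szpil} used to control the topological dimension of the exceptional set so that it cannot obstruct the extension.

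The main obstacle is the second paragraph: turning a qualitative failure of domination into a closed orbit that quantitatively violates the uniform estimates. The delicate points are that the witnessing orbit segment must be long enough and sufficiently non-dominated that a $C^2$-small, spatially localized closing perturbation cannot restore domination or inflate the angle, and that the resulting period genuinely exceeds $T$. Controlling the interplay between the localized support of the perturbation and the length of the shadowed segment, while keeping the perturbed Hamiltonian inside $\mathcal{U}$, is where the technical care --- and the symplectic form of Vivier's Lemma~\ref{Frank} --- will be needed.
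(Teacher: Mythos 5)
Your first paragraph coincides with the paper's setup (star property $\Rightarrow$ no elliptic orbits nearby $\Rightarrow$ Lemma~\ref{lemateta} gives uniform $\theta$, $m$, $T$), but the pivot of your second paragraph --- that the closed orbit produced by the Closing Lemma \emph{inherits} the failure of $m$-domination, or the small angle, from the shadowed segment --- is a genuine gap, not just a delicate technicality. Failure of domination over $\mathcal{E}^\star_{H,e}$ is the nonexistence of a continuous invariant splitting; it does not single out, along a given non-closed recurrent orbit, candidate directions whose ``bad behaviour'' a $C^2$-small closing perturbation would preserve. The hyperbolic splitting of the new closed orbit of $H_2$ is determined by the eigendirections of $\Phi_{H_2}^{\pi}(p)$ itself, and nothing in Lemma~\ref{closing} (which only controls the $C^2$-distance of the Hamiltonians and the $C^0$-shadowing of the orbits) forces that splitting to be non-dominated or to have angle below $\theta$; indeed, by Lemma~\ref{lemateta} it cannot, which is exactly why your inheritance step cannot be carried out. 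What \emph{is} robust under closing is a scalar, finite-time quantity: the norm $\|\Phi^{t}\|$, i.e.\ the approximate Lyapunov exponent. The paper's proof exploits precisely this, with the roles reversed relative to your sketch: on a positive-measure invariant set $\Gamma_m$ with no $m$-dominated splitting it first produces (using the perturbation techniques behind Theorem~\ref{bessa2}, or directly when the zero-exponent set has positive measure) a Hamiltonian $H_1$ close to $H$ and a recurrent point $x$ with $\exp(-\eta t)<\|\Phi_{H_1}^t(x)\|<\exp(\eta t)$ for $\eta<\delta<\frac{\log 2}{2m}$; only then does it close the orbit, obtaining $\exp(-\delta\pi)<\|\Phi_{H_2}^{\pi}(p)\|<\exp(\delta\pi)$ with $\pi>T$. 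Lemma~\ref{lemateta} then \emph{grants} that this long closed orbit is $m$-dominated, and a telescoping estimate over blocks of length $m$, combined with the conservativity of $\Phi_{H_2}^{\pi}$ (the two exponents sum to zero), forces $\frac{1}{\pi}\log\|\Phi_{H_2}^{\pi}(p)\|\geq \frac{1}{2m}\log 2>\delta$, contradicting the closing estimate. So domination of the closed orbit is the hypothesis supplied by the star property, and the quantity contradicted is the exponent, not the domination.

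Your third paragraph also would not repair this. Theorem~\ref{bessa2} applied to some $\tilde H\in\mathcal{D}$ yields a decomposition for $\tilde H$, not for $H$, and $D=\cup_n D_{m_n}$ carries splittings with \emph{non-uniform} constants $m_n$: full measure plus density does not allow a continuous extension of the splitting to the compact surface unless the domination scale is uniform, and the uniform $m$ of Lemma~\ref{lemateta} concerns closed orbits of nearby Hamiltonians, not arbitrary points of the sets $D_{m_n}$ --- you provide no bridge between the two. (Szpilrajn's Theorem~\ref{szpil} plays no role here; in the paper it is used only in the proof of Theorem~\ref{Ass}.) The paper avoids all of this by invoking Theorem~\ref{bessa2} solely through its proof techniques, as the device that lowers the exponent of a single recurrent orbit before closing, after which the contradiction is purely quantitative along one periodic orbit.
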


\begin{proof} 


Observe that, since we are assuming $\mathcal{E}^\star_{H,e}$ to be regular, we have a well defined invariant volume measure $\mu_{\mathcal{E}^\star_{H,e}}$ on $\mathcal{E}^\star_{H,e}$.

Now, by contradiction, assume that $\Phi_{H}^t$ does not admit a dominated splitting over $\mathcal{E}^\star_{H,e}$. Then there exist a $\mu_{\mathcal{E}^\star_{H,e}}$-positive measure  and $X_{H}^t$-invariant set $B\subset\mathcal{E}^\star_{H,e}$ such that $B$ does not admit a dominated splitting for $\Phi_H^t$. In this case we claim that
\begin{claim} For every $m\in \mathbb{N}$, there exists a $\mu_{\mathcal{E}^\star_{H,e}}$-positive measure and $X_{H}^t$-invariant subset of $B$, say $\Gamma_m$, such that $\Gamma_m$ does not admit $m$-dominated splitting for $\Phi_{H}^t$.
\end{claim}
If this claim was not true, there would be $m\in \mathbb{N}$ such that all $\Gamma_m$ in the above conditions would admit an $m$-dominated decomposition for $\Phi_{H}^t$. Taking $\Gamma_m:=B$, we would reach a contradiction, since $B$ does not admit a dominated splitting for $\Phi_H^t$. 

Since $(H,e)\in\mathcal{G}$$^{2}(M)$ we have that each Hamiltonian $\epsilon$-$C^2$-close to $H$ does not admit elliptic closed orbits. Then, from Lemma \ref{lemateta}, for every such a Hamiltonian $H$ there are constants $\theta=\theta(\epsilon,H)$, $m=m(\epsilon, \theta)$ and $T=T(m)$ such that, for each closed orbit with period larger than $T$, we can ensure $m$-dominated splitting and that the angle between its stable and unstable directions is bounded from below by $\theta$. Notice that these closed orbits are all hyperbolic.

Since $\mathcal{E}^\star_{H,e}$ is a compact energy surface and $\mu_{\mathcal{E}^\star_{H,e}}$ is $X_H^t$-invariant, we can apply the \textit{Poincar\'{e} Recurrence Theorem}  on $\mathcal{E}_{H,e}$. So, let $R$ be a measurable and $\mu_{\mathcal{E}^\star_{H,e}}$-positive measure subset of $\Gamma_{m}$ given by the \textit{Poincar\'{e} Recurrence Theorem} with respect to $X_{H}|_{\mathcal{E}^\star_{H,e}}$. Then, $\mu_{\mathcal{E}^\star_{H,e}}(R)=\mu_{\mathcal{E}^\star_{H,e}}(\Gamma_m)$ and every $x\in R$ returns to $\Gamma_m$ infinitely many times under the flow $X_{H}^t|_{\mathcal{E}^\star_{H,e}}$. We observe that the set of closed orbits of period less than $k \in \mathbb{N}$  is a set of zero measure. Let $Q$ denote the subset of points of $\Gamma_m$ having zero Lyapunov exponents for $X_{H}$ on $\mathcal{E}^\star_{H,e}$. We want to choose a point $x \in Q\cap R$; if $\mu_{\mathcal{E}^\star_{H,e}}(Q)>0$ we are done. Now, consider the reverse case: $\mu_{\mathcal{E}^\star_{H,e}}$-a.e. $x\in \Gamma_m$ has a nonzero Lyapunov exponent for $X_{H}|_{\mathcal{E}^\star_{H,e}}$, i.e., $\mu_{\mathcal{E}^\star_{H,e}}(Q)=0$. In this case, the idea is to take $x\in R$ and to use the techniques involved in the proof of Theorem \ref{bessa2} (see \cite{MBJLD}), in order to cause the decay of the Lyapunov exponents. So, for $m$ sufficiently large and $\eta>0$ arbitrarily small, there exist $T_0>0$ and $H_1$, $\epsilon$-$C^2$-close to $H$, and $x$ has Lyapunov exponent less than $\eta$ for $X_{H_1}|_{\mathcal{E}^\star_{H_1,e}}$, i.e.
\begin{displaymath}
\exp(-\eta t)<\left\|\Phi_{H_1}^t(x)\right\|< \exp(\eta t), \;\text{for every} \; t> T_0.
\end{displaymath}

Now, fixing $\delta \in \bigl(0, \frac{log(2)}{2m}\bigr)$ and $\eta<\delta$, one has that there is $T_x\in \mathbb{R}$ such that
\begin{displaymath}
\exp(-\delta t)<\left\|\Phi_{H_1}^t(x)\right\|< \exp(\delta t), \;\text{for every} \; t\geq T_x.
\end{displaymath}
Notice that we can assume $T_x\geq T$.

Once  $x\in R$, we may use the Lemma \ref{closing} and conclude that the $X_{H_1}^t$-orbit of $x$ can be approximated, for a very long recurrent time $\tilde{T}>T_x$ by a closed orbit of a $C^1$-close flow $X_{H_2}^t$: given $r, \tilde{T}>0$ we can find a $\epsilon$-$C^2$-neighbourhood $\mathcal{U}$ of $H_1$, a closed orbit $\Gamma$ of $H_2\in\mathcal{U}$ with period $\pi$, as large as we want, $\hat{T}>\tilde{T}$ and $g:\left[0,\tilde{T}\right]\rightarrow \left[0,\pi \right]$ close to the identity such that, for $p\in \Gamma$,
\begin{itemize}
	\item $dist\Bigl(X^t_{H_1}(x), X^{g(t)}_{H_2}(p)\Bigr)<r$, $0\leq t\leq \hat{T}$,
	\item $H_1=H_2$ on $M\backslash \bigcup_{0\leq t\leq \hat{T}}\Bigl( B_r\big(X^t_{H_1}(p)\big)\Bigr)$.
\end{itemize}
Letting $r$ be small enough, we also have that
\begin{equation}\label{exp2}
\exp(-\delta \pi)< \left\|\Phi_{H_2}^{\pi}(p)\right\|<\exp(\delta \pi)
\end{equation} 
where $\pi>T$.

Once, by construction, $H_2$ is $\epsilon$-$C^2$-close to $H$, one has that the orbit of $p$ under $X_{H_2}^t$ satisfies the conclusions of Lemma \ref{lemateta} and that $\left\|\Phi_{H_2}^m(x)|_{{\mathcal{N}}_x^-}\right\|\leq \frac{1}{2}\left\|\Phi_{H_2}^m(x)|_{{\mathcal{N}}_{x}^+}\right\|$, for all $x$ in the $X_{H_2}^t$-orbit of $p$.

Let $p_i=X_{H_2}^{im}(p)$ for $i=0,...,[\pi/m]$, where $[t]:=\max\left\{k\in\mathbb{Z}:k\leq t\right\}$. Since the subbundles $\mathcal{N}^-$ and $\mathcal{N}^+$ are one-dimensional one has

\begin{eqnarray}
\dfrac{\left\|\Phi_{H_2}^{\pi}(p)|_{{\mathcal{N}}_p^-}\right\|}{\left\|\Phi_{H_2}^{\pi}(p)|_{{\mathcal{N}}_p^+}\right\|}&=&
\dfrac{\left\|\Phi_{H_2}^{\pi-m[\pi/m]+m[\pi/m]}(p)|_{{\mathcal{N}}_p^-}\right\|}{\left\|\Phi_{H_2}^{\pi-m[\pi/m]+m[\pi/m]}(p)|_{{\mathcal{N}}_p^+}\right\|}\\
&=&\dfrac{\left\|\Phi_{H_2}^{\pi-m[\pi/m]}(p)|_{{\mathcal{N}}_p^-}\right\|}{\left\|\Phi_{H_2}^{\pi-m[\pi/m]}(p)|_{{\mathcal{N}}_p^+}\right\|}
\cdot \prod_{i=1}^{[\pi/m]} \dfrac{\left\|\Phi_{H_2}^{m}(p_i)|_{{\mathcal{N}}_{p_i}^-}\right\|}{\left\|\Phi_{H_2}^{m}(p_i)|_{{\mathcal{N}}_{p_i}^+}\right\|}\nonumber\\
&\leq& C(p, H_2)\cdot \biggl(\dfrac{1}{2}\biggr) ^{[\pi/m]}\label{exp1}
\end{eqnarray}
where $$C(p, H_2)=\sup\left\{0\leq t \leq m: \left\|\Phi_{H_2}^t(p)|_{{\mathcal{N}}_p^-}\right\|\cdot \left\|\Phi_{H_2}^t(p)|_{{\mathcal{N}}_p^+}\right\|^{-1}\right\}$$ depends continuously on $H_2$ in the $C^2$ topology. Then, there exists a uniform bound for $C(p,\cdot)$ for all Hamiltonians which are $C^2$-close to $H$.

If we let $r>0$ be small enough, we can take $\pi>T$ arbitrarily large. So, inequality (\ref{exp1}) ensures that
\begin{displaymath}
\dfrac{1}{\pi}\log \left\|\Phi_{H_2}^{\pi}(p)|_{{\mathcal{N}}_p^-}\right\|  \leq \dfrac{1}{\pi}\log C(p,H_2)+ \dfrac{[\pi/m]}{\pi}\log\dfrac{1}{2}+\dfrac{1}{\pi}\log \left\|\Phi_{H_2}^{\pi}(p)|_{{\mathcal{N}}_p^+}\right\|
\end{displaymath}
and also  $\left\|\Phi_{H_2}^{\pi}(p)\right\|=\left\|\Phi_{H_2}^{\pi}(p)|_{{\mathcal{N}}_x^+}\right\|$. Moreover, since $\Phi_{H_2}^{\pi}$ is conservative one has that the sum of the Lyapunov exponents is zero, i.e.

\begin{displaymath}
\dfrac{1}{\pi}\log \left\|\Phi_{H_2}^{\pi}(p)|_{{\mathcal{N}}_p^-}\right\| = - \dfrac{1}{\pi}\log \left\|\Phi_{H_2}^{\pi}(p)|_{{\mathcal{N}}_p^+}\right\| 
\end{displaymath}
So, one has that
\begin{align*}
\dfrac{2}{\pi}\log \left\|\Phi_{H_2}^{\pi}(p)\right\| =\dfrac{2}{\pi}\log \left\|\Phi_{H_2}^{\pi}(p)|_{{\mathcal{N}}_p^+}\right\|&\geq 
-\dfrac{1}{\pi} \log C(p,H_2)-\dfrac{[\pi/m]}{\pi}\log\dfrac{1}{2}\nonumber\\
&\geq -\dfrac{1}{\pi} \log C(p,H_2) +\dfrac{1}{m}\log 2\nonumber.
\end{align*}
Notice that the constants involved in inequality (\ref{exp1}) do not depend on $\pi$. So, we can take the period of $p$ very large such that
\begin{displaymath}
\dfrac{1}{\pi}\log \left\|\Phi_{H_2}^{\pi}(p)\right\| \geq \dfrac{1}{2m}\log 2>\delta. 
\end{displaymath}
This contradicts expression (\ref{exp2}). Then we have that $\Phi_H^t$ admits a do\-mi\-nated splitting over $\mathcal{E}^\star_{H,e}$.
\end{proof}

\begin{remark}\label{remaelliptic}
It follows from the previous proof that the conclusion of Lemma~\ref{lemasing} also holds if we assume that the Hamiltonian flow is far from elliptic orbits and the energy surface $\mathcal{E}^\star_{H,e}$ is regular.
\end{remark}

\bigskip
\begin{lemma}\label{lemanosov}
If $(H,e)\in C^2(M,\mathbb{R})\times H(M)$ is such that $\Phi_{H}^t$ admits a dominated splitting over a regular energy surface $\mathcal{E}_{H,e}$ then $(H,e)$ is an Anosov Hamiltonian level.
\end{lemma}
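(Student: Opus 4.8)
The plan is to show that the given dominated splitting $\mathcal{N}_{\mathcal{E}_{H,e}}=\mathcal{N}^-\oplus\mathcal{N}^+$ is in fact a \emph{hyperbolic} splitting for $\Phi_H^t$; once this is established, Lemma~\ref{hyperbolicpoinc}, applied to the compact, regular and $X_H^t$-invariant set $\Lambda=\mathcal{E}_{H,e}$, immediately gives that $\mathcal{E}_{H,e}$ is hyperbolic for $X_H^t$, that is, that $(H,e)$ is Anosov. The structural feature I would exploit is that, since $\dim M=4$, the transversal bundle $\mathcal{N}_x$ is two-dimensional and $\Phi_H^t$ restricts to a linear symplectomorphism of $\mathcal{N}$; hence $\Phi_H^t$ is area-preserving and the subbundles $\mathcal{N}^-,\mathcal{N}^+$ are one-dimensional. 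The whole argument is the classical ``domination plus conservativeness forces hyperbolicity'' mechanism, specialized to an area-preserving flow on a symplectic plane bundle.

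First I would record the two facts that drive the proof. Because $\mathcal{N}^{\pm}$ are line bundles, the norms are multiplicative along orbits, so for every $n\in\mathbb{N}$ the $m$-domination inequality iterates to
\[
\frac{\left\|\Phi_H^{mn}(x)|_{\mathcal{N}^-_x}\right\|}{\left\|\Phi_H^{mn}(x)|_{\mathcal{N}^+_x}\right\|}\leq\theta^{\,n},\qquad x\in\mathcal{E}_{H,e}.
\]
On the other hand, since $\Phi_H^t$ preserves the area induced by $\omega$ on the two-dimensional fibre $\mathcal{N}_x$, one has $\left|\det\Phi_H^{mn}(x)|_{\mathcal{N}_x}\right|=1$. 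Combining this with the standard relation between the determinant, the norms along the two invariant lines, and the sine of the angle between them, and using that a dominated splitting is transversal (the angle between $\mathcal{N}^-$ and $\mathcal{N}^+$ is uniformly bounded away from zero on the compact surface $\mathcal{E}_{H,e}$), I obtain a uniform constant $C>1$, independent of $x$ and $n$, with
\[
C^{-1}\leq\left\|\Phi_H^{mn}(x)|_{\mathcal{N}^-_x}\right\|\cdot\left\|\Phi_H^{mn}(x)|_{\mathcal{N}^+_x}\right\|\leq C.
\]

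Multiplying the two displays yields $\left\|\Phi_H^{mn}(x)|_{\mathcal{N}^-_x}\right\|^2\leq C\,\theta^{\,n}$, while dividing them yields $\left\|\Phi_H^{mn}(x)|_{\mathcal{N}^+_x}\right\|^2\geq C^{-1}\theta^{-n}$. Choosing $n$ large enough that $C\theta^{\,n}<1$ produces a uniform contraction on $\mathcal{N}^-$; passing to $\Phi_H^{-mn}$ and invoking one-dimensionality once more (so that $\left\|\Phi_H^{-mn}(y)|_{\mathcal{N}^+_y}\right\|=\left\|\Phi_H^{mn}(\Phi_H^{-mn}y)|_{\mathcal{N}^+}\right\|^{-1}$) gives a uniform expansion on $\mathcal{N}^+$. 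Together with the $\Phi_H^t$-invariance of the splitting, this is precisely the definition of hyperbolicity for $\Phi_H^t$ on $\mathcal{E}_{H,e}$, and Lemma~\ref{hyperbolicpoinc} then concludes.

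The one genuinely delicate point is the uniform two-sided control of the product of the norms, i.e.\ turning area-preservation into the bound on $\left\|\Phi_H^{mn}|_{\mathcal{N}^-}\right\|\cdot\left\|\Phi_H^{mn}|_{\mathcal{N}^+}\right\|$. This is where the low-dimensional hypothesis is indispensable: it guarantees that $\mathcal{N}$ is a symplectic plane (so that $\det\Phi_H^{mn}=1$ via Theorem~\ref{sympeigen} and the symplectic character of $\Phi_H^t$), and the transversality property of the dominated splitting keeps the angular factor in the determinant formula bounded above and below. Everything else is elementary once these uniform bounds are in place.
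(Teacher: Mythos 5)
Your proposal is correct and takes essentially the same route as the paper's proof: iterate the domination to get an exponentially small ratio of the norms on the one-dimensional subbundles, use conservativeness of $\Phi_H^t$ together with the uniform angle bound coming from domination (and the uniform bounds available on a compact regular energy surface) to pin the product of those norms between two constants, deduce uniform contraction on $\mathcal{N}^-$ and expansion on $\mathcal{N}^+$, and conclude via Lemma~\ref{hyperbolicpoinc}. The only cosmetic differences are that the paper derives a continuous-time estimate $C\sigma^t$ for all $t$ (writing $t=im+r$ and bounding $\|\Phi_H^r\|$) and encodes the area distortion through the identity involving $\sin(\alpha_t)$ and $\|X_H\|$, whereas you fix a single large iterate $mn$ with $C\theta^n<1$ (which suffices, as the paper notes, by compactness) and invoke preservation of the $\omega$-induced area directly.
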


\begin{proof}
Once that $\Phi_{H}^t$ admits a dominated splitting over $\mathcal{E}_{H,e}$, we have that there exists $m\in \mathbb{N}$ and a constant $\theta\in(0,1)$ such that
$$
\Delta(x,m):= \|\Phi_H^m(x)|_{{\mathcal{N}}_x^-}\|\: \|\Phi_H^{-m}\bigl(X_H^m(x)\bigr)|_{\mathcal{N}_{X_H^m(x)}^+}\|\leq\theta, \ \ \forall \: x\in \mathcal{E}_{H,e}.
$$

Due to the chain rule, for any $i\in \mathbb{N}$ we have $\Delta(x,im)\leq\theta^i$. Also, every $t\in \mathbb{R}$ can be written as $t=im+r$, where $r\in [0,m)$. Since the manifold $M$ is compact, we have that $\left\|\Phi_H^r\right\|$ is bounded, say by $L$. So, we can take $C:=\theta^{-\frac{r}{m}}L^2$ and $\sigma:=\theta^{\frac{1}{m}}$. As $C>0$ and $0<\sigma<1$ these are good candidates to be the constants of hyperbolicity. In fact, for every $x\in \mathcal{E}_{H,e}$ and $t \in\mathbb{R}$, one has that
\begin{align}
\Delta(x,t)&=\Delta(x,im+r)=\|\Phi_H^{im+r}(x)|_{{\mathcal{N}}_x^-}\|\|\Phi_H^{-im-r}\bigl(X_H^{im+r}(x)\bigr)|_{{\mathcal{N}}_{X_H^{im+r}(x)}^+}\|\nonumber\\
&=\|\Phi_H^{im}\bigl(X_H^r(x)\bigr)|_{{\mathcal{N}}_{X^r_H(x)}^-}\|\|\Phi_H^{r}(x)|_{{\mathcal{N}}_x^-}\|\:.\\
&\,\,\,\,\,\,. \,\|\Phi_H^{-im}\bigl(X_H^{im}(x)\bigr)|_{{\mathcal{N}}_{X^{im}_H(x)}^+}\| \|\Phi_H^{-r}\bigl(X_H^{im+r}(x)\bigr)|_{{\mathcal{N}}_{X^{im+r}_H(x)}^+}\|\nonumber\\
&\leq L^2\: \Delta(x,im)\leq L^2\: \theta^i= L^2 \:\theta^{\frac{im}{m}}=L^2 \:\theta^{\frac {t-r}{m}}= \theta^{\frac{-r}{m}}L^2\:\theta^{\frac{t}{m}}=C\:\sigma^t\nonumber.
\end{align}

Denote by $\alpha_t$ the angle between the fibers $\mathcal{N}$$^-_{X_H^t(x)}$ and $\mathcal{N}$$^+_{X_H^t(x)}$ and notice that, by domination, $\alpha_t\geq \beta>0$. Since Crit$(H|\mathcal{E}_{H,e})=\emptyset$, there is $K>1$ such that, for every $x\in \mathcal{E}_{H,e}$, $K^{-1}\leq\left\|X_H(x)\right\|\leq K$. As $\Phi_H^t$ is conservative and  the subbundles $\mathcal{N}$$^-$ and $\mathcal{N}$$^+$ are both one dimensional, we have that
\begin{displaymath}
\sin(\alpha_0)=\left\|\Phi_H^t(x)|_{{\mathcal{N}}_x^-}\right\| \left\|\Phi_H^t(x)|_{{\mathcal{N}}_x^+}\right\| \sin(\alpha_t)\dfrac{\left\|X_H(X_H^t(x))\right\|}{\left\|X_H(x)\right\|}.
\end{displaymath}

Given $t \in \mathbb{R}$, as $0<\beta\leq \alpha_t<\dfrac{\pi}{2}$, we have that ${\sin(\alpha_t)}\geq {\sin(\beta)}$. Taking a positive $C_1:=\sin(\beta)^{-1}\: K^2$, for every $x \in \mathcal{E}_{H,e}$ we have that
\begin{align}
\left\|\Phi_H^t(x)|_{{\mathcal{N}}_x^+}\right\|^2&=\dfrac{\sin(\alpha_0)}{\sin(\alpha_t)} \; \dfrac{\left\|X_H(x)\right\|}{\left\|X_H(X_H^t(x))\right\|}   \left\|\Phi_H^{-t}(x)|_{{\mathcal{N}}_x^-}\right\| \left\|\Phi_H^t(x)|_{{\mathcal{N}}_x^+}\right\|\nonumber\\
&\leq \sin(\beta)^{-1}\: K^2\;\Delta(x,t)\leq \sin(\beta)^{-1}\: K^2\; C\: \sigma^t     \nonumber\\
&= C_1\:\sigma^t\nonumber.
\end{align}

Analogously, for every $x\in \mathcal{E}_{H,e}$ we get 
\begin{align}
\left\|\Phi_H^{-t}(x)|_{{\mathcal{N}}_x^-}\right\|^2&=\dfrac{\sin(\alpha_t)}{\sin(\alpha_0)} \; \dfrac{\left\|X_H(X_H^t(x))\right\|}{\left\|X_H(x)\right\|} \;\Delta(x,t)\nonumber\\
&\leq \sin(\beta)^{-1}\: K^2\; C\: \sigma^t     \nonumber\\
&= C_1\:\sigma^t\nonumber .
\end{align}

These two inequalities show that $\mathcal{E}_{H,e}$ is hyperbolic for the transversal linear Poincar\'{e} flow. Then, by Lemma \ref{hyperbolicpoinc}, $\mathcal{E}_{H,e}$ is also hyperbolic for $X_H^t$, i.e., $(H,e)$ is Anosov.
\end{proof}

This finishes the first part of Theorem~\ref{mainth}. 

\begin{remark}
Notice that this result is more general than it looks because, since we do not need the energy $e$ to vary, it works for a larger number of Hamiltonian levels $(H,e)$.

We also point out that, if each one of the finite connected components of $H^{-1}(\{e\})$ are regular and belong to $\mathcal{G}^2(M)$, we are able to conclude that all the energy levels $H^{-1}(\{e\})$ are Anosov.
\end{remark}

Now, we easily derive the remaining part of Theorem \ref{mainth}.

\begin{lemma}
If $(H,e)\in\mathcal{G}$$^{2}(M)$ is such that $\mathcal{E}^\star_{H,e}$ is not regular, then there exists $\tilde{e}$, arbitrarily close to $e$, such that $X_H^t|_{\mathcal{E}^\star_{H,\tilde{e}}}$ is Anosov.
\end{lemma}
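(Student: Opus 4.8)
The plan is to reduce this non-regular case to the regular one already settled by Lemmas~\ref{lemasing} and~\ref{lemanosov}, moving the energy $\tilde{e}$ off the critical value instead of perturbing the Hamiltonian. The key observation is that the singularities of $X_H$ lying on $\mathcal{E}^\star_{H,e}$ are, by the star hypothesis, hyperbolic, and a hyperbolic singularity is automatically a \emph{nondegenerate} critical point of $H$: at such a point $p$ one has $D_pX_H=J\cdot D^2_pH$ with $J$ invertible, so the hyperbolicity of $D_pX_H$ (in particular $0$ is not an eigenvalue) forces $D^2_pH$ to be nonsingular. Hence every critical point of $H$ on $\mathcal{E}^\star_{H,e}$ is a Morse critical point, and therefore isolated.

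First I would fix a neighbourhood $\mathcal{W}$ of $\mathcal{E}^\star_{H,e}$ adapted to the analytic continuation, together with the $\delta>0$ supplied by Definition~\ref{hss}. Since $\mathcal{E}^\star_{H,e}$ is compact and its critical points are isolated, there are only finitely many of them, say $p_1,\dots,p_k$, all lying in the single level $e$. Shrinking $\mathcal{W}$ if necessary, and using that $\mathrm{Crit}(H)$ is closed while each $p_i$ is isolated, I can arrange that $\mathrm{Crit}(H)\cap\mathcal{W}=\{p_1,\dots,p_k\}$. Consequently, for every $\tilde{e}\in(e-\delta,e+\delta)$ with $\tilde{e}\neq e$, the analytic continuation $\mathcal{E}^\star_{H,\tilde{e}}=H^{-1}(\{\tilde{e}\})\cap\mathcal{W}$ contains no critical point, i.e.\ it is a regular energy surface. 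Its nonemptiness for $\tilde{e}$ close to $e$ follows from the presence of regular points on the three-dimensional set $\mathcal{E}^\star_{H,e}$, near which $H$ is a submersion.

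Next I would invoke the persistence built into the star property itself. By Definition~\ref{hss}, the pair $(H,\tilde{e})$ is again a Hamiltonian star system for every $\tilde{e}\in(e-\delta,e+\delta)$, since all closed orbits and critical points on the analytic continuation $\mathcal{E}^\star_{H,\tilde{e}}$ are hyperbolic. Thus, for $\tilde{e}\neq e$ as above, $(H,\tilde{e})\in\mathcal{G}^2(M)$ and $\mathcal{E}^\star_{H,\tilde{e}}$ is regular. Lemma~\ref{lemasing} then yields a dominated splitting for $\Phi_H^t$ over $\mathcal{E}^\star_{H,\tilde{e}}$, and Lemma~\ref{lemanosov} upgrades it to uniform hyperbolicity, so that $X_H^t|_{\mathcal{E}^\star_{H,\tilde{e}}}$ is Anosov. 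Because $\tilde{e}$ ranges over a whole punctured neighbourhood of $e$, it can be chosen arbitrarily close to $e$, which is precisely the assertion.

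The main obstacle is the first step: certifying that $e$ is isolated among the critical values attained inside $\mathcal{W}$, so that a genuinely regular level sits arbitrarily close to it. This is exactly where the identification of hyperbolic singularities with nondegenerate Morse critical points is indispensable, combined with the compactness of $\mathcal{E}^\star_{H,e}$ to force finiteness. A secondary, easily managed subtlety is that the topology of the level set may change as $\tilde{e}$ crosses $e$, so that $\mathcal{E}^\star_{H,\tilde{e}}$ could a priori be disconnected; in that event one simply applies Lemmas~\ref{lemasing} and~\ref{lemanosov} to each connected component, each being compact, regular, $X_H^t$-invariant, and inheriting the star property.
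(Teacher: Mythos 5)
Your proposal is correct and follows essentially the same route as the paper's own (deliberately terse) proof: hyperbolicity of the critical points on $\mathcal{E}^\star_{H,e}$ forces them to be isolated, hence finite by compactness, so every nearby level $\tilde{e}\neq e$ yields a regular star surface to which Lemmas~\ref{lemasing} and~\ref{lemanosov} apply. Your additional details --- the Morse nondegeneracy argument via $D_pX_H=J\cdot D^2_pH$, the persistence of the star property to $(H,\tilde{e})$ straight from Definition~\ref{hss}, and the componentwise treatment of a possibly disconnected continuation --- merely flesh out steps the paper leaves implicit.
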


\begin{proof} The proof is straightforward. As $(H,e)\in\mathcal{G}$$^{2}(M)$, $\mathcal{E}_{H,e}$ has only a finite number of  critical points and they are hyperbolic for $\Phi_H^t$. Therefore there exists $\tilde{e}$, arbitrarily close to $e$, such that $(H,\tilde{e})\in\mathcal{G}$$^{2}(M)$ and $\mathcal{E}^\star_{H,\tilde{e}}$ is regular. Now previous lemmas show that $(H,\tilde{e})$ is Anosov.
\end{proof}

\end{subsection}


As a consequence of Theorem \ref{mainth} we prove that the boundary of $\mathcal{A}^4(M)$ has no isolated points.

\begin{proof}(of Corollary~\ref{maincorollary})

By contradiction, let $(H,e)$ be an isolated point on the boundary of $\mathcal{A}^4(M)$. We start the prove by claiming:
\begin{claim}\label{cl}
If $(H,e)$ is an isolated point on the boundary of $\mathcal{A}^4(M)$ then any energy surface  $\mathcal{E}_{H,e}$ is regular.
\end{claim}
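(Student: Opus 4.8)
The claim asserts that if $(H,e)$ is an isolated point on the boundary of $\mathcal{A}^4(M)$, then any energy surface $\mathcal{E}_{H,e}$ is regular (contains no critical points of $H$).

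The plan is to argue by contradiction. Suppose $\mathcal{E}_{H,e}$ contains a critical point $p$ of $H$. Being an isolated boundary point of $\mathcal{A}^4(M)$ means that $(H,e)$ lies in $\partial\mathcal{A}^4(M)$ but has a neighbourhood in $C^2(M,\mathbb{R})\times\mathbb{R}$ meeting $\partial\mathcal{A}^4(M)$ only at $(H,e)$ itself; equivalently, every nearby Hamiltonian level distinct from $(H,e)$ is either Anosov or lies in the exterior of the closure of $\mathcal{A}^4(M)$. The key structural fact I would exploit is that Anosov Hamiltonian levels are always regular: as noted in the subsection on Anosov levels, if $(H,e)$ is Anosov then $\mathcal{E}_{H,e}$ contains no critical points. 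Therefore the presence of the critical point $p$ forces $(H,e)$ itself to be a genuine boundary point (not interior to $\mathcal{A}^4(M)$), consistent with its lying on $\partial\mathcal{A}^4(M)$.

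The heart of the argument is to produce, arbitrarily close to $(H,e)$, a whole \emph{interval} of parameters lying on the boundary of $\mathcal{A}^4(M)$, contradicting isolation. First I would invoke Theorem~\ref{mainth}: since $(H,e)\in\mathcal{G}^2(M)$ with $\mathcal{E}^\star_{H,e}$ non-regular, there is $\tilde{e}$ arbitrarily close to $e$ with $X_H^t|_{\mathcal{E}^\star_{H,\tilde e}}$ Anosov, so nearby energies on one side do enter $\mathcal{A}^4(M)$, placing $(H,e)$ truly on the boundary. Next, I would analyze the behaviour of the energy surface as $\tilde e$ sweeps through $e$. Because $p$ is a hyperbolic critical point of $H$ lying on $\mathcal{E}_{H,e}$, the local topology of the energy level changes as $\tilde e$ crosses $e$: on one side the analytic continuation $\mathcal{E}^\star_{H,\tilde e}$ is a regular surface carrying the Anosov structure, while on the other side the component either disappears or fails to admit the hyperbolic splitting through the singularity. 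The goal is to show that the value $e$ at which the critical point sits is not isolated in the set of boundary energies: by perturbing $H$ to $\tilde H$ in a $C^2$-neighbourhood one can shift the critical energy $H(p)$ continuously, so that the pair $(\tilde H,\tilde e)$ with $\tilde e = \tilde H(p)$ again has a non-regular star energy surface and hence again sits on $\partial\mathcal{A}^4(M)$ by the same reasoning. This yields a continuum of boundary points accumulating on $(H,e)$, contradicting that $(H,e)$ is isolated.

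The main obstacle, and the step requiring the most care, is establishing rigorously that a critical point on the energy surface genuinely forces $(H,e)$ to sit on the boundary rather than in the interior or exterior of $\mathcal{A}^4(M)$, and that this boundary membership persists under the small perturbations that move the critical energy. Concretely, I must show two things: that levels on one side of $e$ are Anosov (hence belong to $\mathcal{A}^4(M)$) while $(H,e)$ itself is not Anosov (because it is non-regular), and that the map $\tilde H \mapsto (\tilde H, \tilde H(p))$ produces nearby \emph{distinct} boundary points. The first is handled by combining the regularity of Anosov levels with Theorem~\ref{mainth}; the second requires verifying that the hyperbolic critical point $p$ has a robust analytic continuation $p(\tilde H)$ under $C^2$-perturbations (by the implicit function theorem applied to $d H = 0$, using hyperbolicity to guarantee $D^2 H$ is invertible at $p$), so that $\tilde H(p(\tilde H))$ varies continuously and non-trivially, generating the required family of distinct non-regular star levels on the boundary. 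Assembling these facts contradicts the assumed isolation and forces $\mathcal{E}_{H,e}$ to be regular.
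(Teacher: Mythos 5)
Your overall strategy --- perturb $H$ so that the critical point survives and its critical value moves, then contradict isolation --- contains the right germ, but the way you close the argument has two genuine gaps. First, you invoke Theorem~\ref{mainth} by asserting $(H,e)\in\mathcal{G}^2(M)$, and you reuse it implicitly when you declare that each perturbed pair $(\tilde H,\tilde H(p(\tilde H)))$ ``again sits on $\partial\mathcal{A}^4(M)$ by the same reasoning''. The star property is not among the hypotheses of the claim: the corollary's setting gives only that $(H,e)$ is an isolated point of $\partial\mathcal{A}^4(M)$, and the paper is itself careful, in the remainder of the corollary's proof, to note that the $\mathcal{G}^2(M)$ assumption is unavailable and must be replaced by consequences of isolation (Remark~\ref{remaelliptic}). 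Nothing in your proposal rules out non-hyperbolic closed orbits on the perturbed levels, so their membership in $\mathcal{G}^2(M)$, and hence their being accumulated by Anosov levels, is unsupported. Second, you apply the implicit function theorem ``because $p$ is a hyperbolic critical point'' --- but hyperbolicity of $p$ is not given. The paper's proof splits into cases: if $q$ is not hyperbolic, one first makes it hyperbolic by a small perturbation (which already yields a nearby distinct level with a critical point), and only then runs the persistence argument.

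Beyond these gaps, the detour through a ``continuum of boundary points'' is both unnecessary and in tension with the correct mechanism. Since $\mathcal{A}^4(M)$ is open (Theorem~\ref{Aopen}), a punctured neighbourhood of the isolated boundary point $(H,e)$ is connected, avoids $\partial\mathcal{A}^4(M)$, and therefore lies \emph{entirely} inside $\mathcal{A}^4(M)$: every nearby level distinct from $(H,e)$ is Anosov. Consequently a \emph{single} perturbed pair carrying the continued (hyperbolic) critical point is simultaneously Anosov and non-regular, contradicting the observation that Anosov levels are regular --- this is exactly the paper's short proof, requiring no appeal to Theorem~\ref{mainth}. Note that your own construction of a one-parameter family of nearby non-regular (hence non-Anosov) levels, were it completed, would already contradict the fact that all nearby distinct levels are Anosov; the contradiction sits one step earlier than where you place it, so the machinery for exhibiting an interval of boundary points never needs to be built.
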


If this claim was not true, we could take a critical point $q$ associated to some energy surface $\mathcal{E}_{H,e}$. It could be hyperbolic, or not. If $q$ is hyperbolic then, since $(H,e)$ is isolated on the boundary of $\mathcal{A}^4(M)$, an adequate perturbation of $(H,e)$ will produce a Hamiltonian level $(\tilde{H},\tilde{e})$ on $\mathcal{A}^4(M)$ with a critical point, which is a contradiction. Now, supposing that $q$ is not hyperbolic by a small adequate perturbation on $(H,e)$ we can make it hyperbolic, which again is a contradiction because $(H,e)$ is an isolated point of the boundary of $\mathcal{A}^4(M)$. This proves the claim.

Now, we fix some energy level $\mathcal{E}_{H,e}$ and follow the ideas presented in the proof of Theorem \ref{mainth} to get a contradiction. We start by proving that $\Phi_H^t$ admits a dominated splitting over $\mathcal{E}_{H,e}$. Notice that, in Lemma \ref{lemasing}, the main step is obtained because we had $(H,e)\in \mathcal{G}^2(M)$, and so elliptic orbits are not allowed in $\mathcal{E}_{H,e}$. However, even without this assumption, we can go on with a similarly proof because $(H,e)$ is an isolated point on the boundary of $\mathcal{A}^4(M)$. So, any small perturbation $(\tilde{H}, \tilde{e})$ arbitrarily close to $(H,e)$ will be in $\mathcal{A}^4(M)$, which enables the existence of elliptic orbits in $\mathcal{E}_{H,e}$. Finally, since the Claim \ref{cl} ensures that the energy surface $\mathcal{E}_{H,e}$ is regular, by Lemma \ref{lemanosov} we get that $\Phi_H^t|_{\mathcal{E}_{H,e}}$, in particular the Hamiltonian level $(H,e)$ is Anosov. This is a contradiction because we took $(H,e)$ on the boundary of $\mathcal{A}^4(M)$. So, the boundary of $\mathcal{A}^4(M)$ can not have isolated points.
\end{proof}

\end{section}


\begin{section}{Anosov Hamiltonian levels}\label{anosov}

\begin{subsection}{Openness of $\mathcal{A}^{2d}(M)$ \label{open}}

The next result states that Anosov Hamiltonian levels are open on $C^2(M,\mathbb{R})\times\mathbb{R}$. The proof follows standard cone-fields arguments that can be found, for example,  in the book of Brin and Stuck (\cite{brin}).

Let $\Lambda \subset \mathcal{R}$ be a hyperbolic set for $\Phi_H^t$. Since the subbundles $\mathcal{N}^-$ and $\mathcal{N}^+$ are continuous, we extend them to continuous subbundles $\mathcal{\tilde{N}}^-$ and $\mathcal{\tilde{N}}^+$, defined on a neighbourhood $\mathcal{U}$ of $\Lambda$, $\mathcal{U}\subset \mathcal{R}$. Take $x\in \mathcal{U}$ and $v\in \mathcal{N}_x$ and let $v=v^-+v^+$ with $v^-\in\mathcal{N}^-_x$ and $v^+\in\mathcal{N}^+_x$. For $\alpha>0$, define the \textit{stable} and \textit{unstable cones} of size $\alpha$ by  

\begin{displaymath}
K_{\alpha}^-(x)=\left\{v\in \mathcal{N}_x:     \left\|v^+\right\|   \leq \alpha\left\|v^-\right \|   \right\},
\end{displaymath}
\begin{displaymath}
K_{\alpha}^+(x)=\left\{v\in \mathcal{N}_x:     \left\|v^-\right\|   \leq \alpha\left\|v^+\right \|   \right\}.
\end{displaymath}

We have the following proposition.
\bigskip

\begin{proposition}\label{propcone}
Take $H\in C^2(M, \mathbb{R})$ and $\Lambda\subset M \setminus \text{Crit}(H)$ a compact and $X_H^t$-invariant set. Suppose that there are $m\in \mathbb{N}$, $\alpha>0$ and continuous subspaces $\mathcal{\tilde{N}}^-_x$ and $\mathcal{\tilde{N}}^+_x$, for every $x\in \Lambda$, such that $\mathcal{N}_x=\mathcal{\tilde{N}}^-_x\oplus\mathcal{\tilde{N}}^+_x$ and that the $\alpha$-cones $K_{\alpha}^-(x)$ and $K_{\alpha}^+(x)$, determined by the subspaces, satisfy
\begin{itemize}
	\item $\Phi_H^t(x)\bigl(K_{\alpha}^+(x)\bigr)\subset K_{\alpha}^+\bigl(X_H^t(x)\bigr)$, $t\geq 0$,
	\item $\Phi_H^{-t}\bigl(X_H^t(x)\bigr)\bigl(K_{\alpha}^-\bigl(X_H^t(x)\bigr)\bigr)\subset K_{\alpha}^-(x)$, $t\geq 0$,
	\item $\left\| \Phi_H^m(x)v \right\|< \left\| v \right\|, \forall\: v \in K_{\alpha}^-(x)\backslash \left\{0\right\}$,
	\item $\left\| \Phi_H^{-m}(x)v \right\|< \left\| v \right\|, \forall\: v \in K_{\alpha}^+(x)\backslash \left\{0\right\}$.
\end{itemize}
Then $\Lambda$ is a hyperbolic set for $\Phi_H^t$.
\end{proposition}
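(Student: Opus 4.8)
The plan is to use the invariant cone fields to manufacture genuinely $\Phi_H^t$-invariant stable and unstable subbundles, realized as nested intersections of forward (respectively backward) iterated cones, and then to extract the uniform hyperbolicity estimates from compactness of $\Lambda$. This is the classical cone criterion, and the construction I have in mind follows the scheme in \cite{brin}.

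First I would define the candidate unstable subbundle at each $x\in\Lambda$ by
\[
\mathcal{N}_x^+:=\bigcap_{n\geq 0}\Phi_H^{nm}\bigl(X_H^{-nm}(x)\bigr)\bigl(K_{\alpha}^+\bigl(X_H^{-nm}(x)\bigr)\bigr),
\]
and symmetrically the stable subbundle $\mathcal{N}_x^-$ as the intersection over $n\geq 0$ of the backward images $\Phi_H^{-nm}\bigl(X_H^{nm}(x)\bigr)\bigl(K_{\alpha}^-\bigl(X_H^{nm}(x)\bigr)\bigr)$. By the cone-invariance hypotheses these are nested decreasing sequences of nonempty closed cones, so each intersection is a nonempty closed subcone of $\mathcal{N}_x$.

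The main step is to show that each such intersection is exactly one line. Here I would use the strict contraction $\left\|\Phi_H^{-m}(x)v\right\|<\left\|v\right\|$, valid for every $v\in K_{\alpha}^+(x)\setminus\{0\}$. Since the unit vectors of the cone field form a compact set and $\Lambda$ is compact, this strict inequality is uniform: there is $\theta_0<1$ with $\left\|\Phi_H^{-m}(x)v\right\|\leq\theta_0\left\|v\right\|$ on the unstable cones. A standard estimate then shows that the angular diameter of the iterated cones decays geometrically, so the intersection collapses to a single direction; this identifies $\mathcal{N}_x^+$ (and symmetrically $\mathcal{N}_x^-$) as one-dimensional subbundles. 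Converting the pointwise strict inequalities of the statement into this uniform geometric contraction of cone diameters is the crux of the argument, and the place where compactness of $\Lambda$ is indispensable.

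It remains to collect the conclusions. By construction the intersections commute with the flow, so $\mathcal{N}_x^{\pm}$ are $\Phi_H^t$-invariant; since $\mathcal{N}_x^+\subset K_{\alpha}^+(x)$ and $\mathcal{N}_x^-\subset K_{\alpha}^-(x)$ and the two cones meet only at the origin, they are transverse and yield a splitting $\mathcal{N}_x=\mathcal{N}_x^-\oplus\mathcal{N}_x^+$ with angle bounded away from zero. The uniform cone contraction also gives uniform convergence of the nested intersections, hence continuity of $x\mapsto\mathcal{N}_x^{\pm}$. Finally, because $\mathcal{N}_x^-\subset K_{\alpha}^-(x)$, the hypothesis $\left\|\Phi_H^m(x)v\right\|<\left\|v\right\|$ forces the continuous function $x\mapsto\left\|\Phi_H^m(x)|_{\mathcal{N}_x^-}\right\|$ to attain a maximum $\theta<1$ on the compact set $\Lambda$, and symmetrically $\left\|\Phi_H^{-m}(x)|_{\mathcal{N}_x^+}\right\|\leq\theta$. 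By the remark that a single value of $m$ suffices to detect hyperbolicity, this is exactly the definition of hyperbolicity of $\Phi_H^t$ on $\Lambda$.
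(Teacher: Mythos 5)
Your proposal is correct and follows essentially the same route as the paper: you define $\mathcal{N}_x^{\pm}$ as nested intersections of forward/backward iterated cones, use compactness of $\Lambda$ and of the unit sphere bundle to upgrade the pointwise strict inequalities to a uniform contraction constant $\theta<1$, and conclude hyperbolicity since $\mathcal{N}_x^{\pm}\subset K_{\alpha}^{\pm}(x)$. In fact you are somewhat more careful than the paper, which asserts the splitting $\mathcal{N}_x=\mathcal{N}_x^-\oplus\mathcal{N}_x^+$ as obvious, whereas you supply the geometric decay of cone diameters that justifies the intersections collapsing to single directions (valid here since, in the paper's four-dimensional setting, $\mathcal{N}_x$ is two-dimensional).
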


\begin{proof}
By compactness of $\Lambda$ and of the unit tangent bundle of $M$, there is a constant $\theta\in(0,1)$ such that $\left\| \Phi_H^m(x)v \right\|\leq \theta\left\| v \right\|, \forall \:v \in K_{\alpha}^-(x)$ and $\left\| \Phi_H^{-m}(x)v \right\|\leq \theta\left\| v \right\|, \forall \: v \in K_{\alpha}^+(x)$.

For any $x\in \Lambda$, define 

$$
\mathcal{N}_x^-:=\bigcap_{n\in \mathbb{N}_0}\Phi_H^{-n}\bigl(X_H^{n}(x)\bigr)K_{\alpha}^-\bigl(X_H^{n}(x)\bigr) \ \ \ \ \text{ and}$$

$$ \mathcal{N}_x^+:=\bigcap_{n\in \mathbb{N}_0}\Phi_H^{n}\bigl(X_H^{-n}(x)\bigr)K_{\alpha}^+\bigl(X_H^{-n}(x)\bigr).
$$

Obviously we have that $\mathcal{N}_x=\mathcal{N}^-_x\oplus\mathcal{N}^+_x$ and that the fibers are invariant under the flow. Also, notice that $\mathcal{N}_x^-\subset K_\alpha^-(x)$ and $\mathcal{N}_x^+\subset K_\alpha^+(x)$. So, $\left\| \Phi_H^m(x)v \right\|\leq \theta\left\| v \right\|, \forall \:v \in \mathcal{N}_x^-$ and  $\left\| \Phi_H^{-m}(x)v \right\|\leq \theta\left\| v \right\|, \forall \:v \in \mathcal{N}_x^+$.
This means that $\Lambda$ is a hyperbolic set for $\Phi_H^t$.
\end{proof}

\begin{maintheorem}\label{Aopen}
$\mathcal{A}^{2d}(M)$ is open in $C^2(M, \mathbb{R})\times H(M)$.
\end{maintheorem}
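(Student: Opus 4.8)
The plan is to deduce openness directly from the cone-field criterion of Proposition~\ref{propcone}, together with the equivalence between hyperbolicity for $X_H^t$ and for $\Phi_H^t$ furnished by Lemma~\ref{hyperbolicpoinc}. Start with $(H,e)\in\mathcal{A}^{2d}(M)$, so that some energy surface $\mathcal{E}_{H,e}$ is hyperbolic for $X_H^t$; by Lemma~\ref{hyperbolicpoinc} it is hyperbolic for the transversal linear Poincar\'e flow $\Phi_H^t$, which provides a continuous $\Phi_H^t$-invariant splitting $\mathcal{N}=\mathcal{N}^-\oplus\mathcal{N}^+$ and constants $m\in\mathbb{N}$, $\theta\in(0,1)$. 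From these I would first verify that strict cone conditions hold: for a suitable $\alpha>0$, the stable and unstable $\alpha$-cones $K_\alpha^\pm$ determined by $\mathcal{N}^\pm$ are forward/backward invariant and strictly contracted by $\Phi_H^{\pm m}$, exactly as in the hypotheses of Proposition~\ref{propcone}. Since $\mathcal{E}_{H,e}$ is compact and the relevant inequalities are strict, all of these conditions hold with a uniform margin.

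The second step is to transport the cone field to nearby levels. Fixing $\mathcal{E}_{H,e}$ and a small neighbourhood $\mathcal{W}$, recall from Section~\ref{notation} that there are a $C^2$-neighbourhood $\mathcal{U}$ of $H$ and $\delta>0$ such that, for every $\tilde{H}\in\mathcal{U}$ and $\tilde{e}\in(e-\delta,e+\delta)$, the analytic continuation $\mathcal{E}_{\tilde{H},\tilde{e}}=\tilde{H}^{-1}(\{\tilde{e}\})\cap\mathcal{W}$ is a regular compact energy surface close to $\mathcal{E}_{H,e}$. Using a local identification of $\mathcal{E}_{\tilde{H},\tilde{e}}$ with $\mathcal{E}_{H,e}$ inside $\mathcal{W}$ (projection along the transverse directions), I would carry the continuous subbundles $\mathcal{N}^\pm$ over to continuous subbundles on $\mathcal{E}_{\tilde{H},\tilde{e}}$, and thereby define $\alpha$-cones there.

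The crux is continuity of the dynamics. Since $H\mapsto X_H$ and hence $H\mapsto\Phi_H^t$ are continuous in the $C^1$ topology (recall that $X_H$ depends on the second derivatives of $H$, so the $C^2$ topology on Hamiltonians controls the $C^1$ topology on vector fields), the time-$m$ maps $\Phi_{\tilde{H}}^{m}$ and $\Phi_{\tilde{H}}^{-m}$ over $\mathcal{E}_{\tilde{H},\tilde{e}}$ converge uniformly to $\Phi_H^{\pm m}$ over $\mathcal{E}_{H,e}$ as $(\tilde{H},\tilde{e})\to(H,e)$. As the cone conditions are open, being strict inequalities on a compact set, they persist: shrinking $\mathcal{U}$ and $\delta$ if necessary, all four hypotheses of Proposition~\ref{propcone} hold for $\Phi_{\tilde{H}}^t$ on $\mathcal{E}_{\tilde{H},\tilde{e}}$ with the transported cones. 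Proposition~\ref{propcone} then gives that $\mathcal{E}_{\tilde{H},\tilde{e}}$ is hyperbolic for $\Phi_{\tilde{H}}^t$, and Lemma~\ref{hyperbolicpoinc} upgrades this to hyperbolicity for $X_{\tilde{H}}^t$, so $(\tilde{H},\tilde{e})\in\mathcal{A}^{2d}(M)$. This exhibits a neighbourhood of $(H,e)$ contained in $\mathcal{A}^{2d}(M)$.

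I expect the main obstacle to lie in the bookkeeping of the second and third steps rather than in any single estimate: one must make precise the identification between the perturbed energy surface and the original one, so that the transported cone field is genuinely continuous and the uniform convergence of the time-$m$ Poincar\'e maps over two distinct (but nearby) three-manifolds is legitimate. Once this comparison is set up cleanly, the persistence of the strict cone inequalities is a routine compactness argument.
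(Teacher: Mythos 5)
Your proposal is correct and takes essentially the same route as the paper: both deduce openness from the cone-field criterion of Proposition~\ref{propcone} applied to the analytic continuation $\mathcal{E}_{\tilde{H},\tilde{e}}$, using the persistence of strict cone inequalities under $C^2$-small perturbation and Lemma~\ref{hyperbolicpoinc} to pass between hyperbolicity for $\Phi_H^t$ and for $X_H^t$. The only cosmetic difference is that the paper extends $\mathcal{N}^-$ and $\mathcal{N}^+$ continuously to an ambient neighbourhood $\mathcal{U}$ of $\mathcal{E}_{H,e}$, so the cones are automatically defined over every nearby energy surface, which sidesteps the surface-to-surface identification you flag as your main bookkeeping obstacle.
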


\begin{proof}
We want to prove that if $(H,e)\in C^2(M, \mathbb{R})\times H(M)$ is an Anosov Hamiltonian level then there is a $C^2$-neighbourhood $\mathcal{V}$ of $H$ and $\delta>0$ such that, for every $\tilde{H}$ in $\mathcal{V}$ and every $\tilde{e}\in \left(e-\delta, e+\delta\right)$, the Hamiltonian level $(\tilde{H},\tilde{e})$ is also Anosov.

Since the Hamiltonian level $(H,e)$ is Anosov, we have that there ex\-ists a (compact and $X_H^t$-invariant) energy surface $\mathcal{E}_{H,e}$ which is hyperbolic. So, we have the $\Phi_H^t$-invariant and hyperbolic splitting $\mathcal{N}_{\mathcal{E}_{H,e}}=\mathcal{N}_{\mathcal{E}_{H,e}}^+\oplus \mathcal{N}_{\mathcal{E}_{H,e}}^-$. 

We have seen that, fixing a Hamiltonian function, $H^{-1}(\{e\})$ can be splitted into a finite number of disjoint compact connected components. These components are pairwise separated by a positive distance. So, since the compact regular energy surface $\mathcal{E}_{H,e}$ is a connected component of the set $H^{-1}(\left\{e\right\})$, for any small neighbourhood $\mathcal{U}$ of $\mathcal{E}_{H,e}$, there is a $C^2$-neighbourhood $\mathcal{V}$ of $H$ and $\delta>0$ such that $\forall\; \tilde{e}\in (e-\delta,e+\delta)$ and $\forall\; \tilde{H}\in \mathcal{V}$ such that the set $\tilde{H}^{-1}(\left\{\tilde{e}\right\})$ admits exactly one connected component in $\mathcal{U}$, say $\mathcal{E}_{\tilde{H},\tilde{e}}$.

Now, we continuously extend $\mathcal{N}^-$ and $\mathcal{N}^+$ over $\mathcal{E}_{H,e}$ to $\mathcal{\tilde{N}}^-$ and $\mathcal{\tilde{N}}^+$ over $\mathcal{U}$. For an appropriate choice of small $\mathcal{U}$ and $\alpha>0$, we have that, for every $(\tilde{H},\tilde{e})\in \mathcal{V}\times (e-\delta,e+\delta)$, the stable and unstable $\alpha$-cones determined by $\mathcal{\tilde{N}}^-$ and $\mathcal{\tilde{N}}^+$ satisfy the assumptions of Proposition \ref{propcone} for $\Phi_{\tilde{H}}^t$ on $\mathcal{E}_{\tilde{H},\tilde{e}}$. So, we have that $\mathcal{E}_{\tilde{H},\tilde{e}}$ is hyperbolic for $\Phi_{\tilde{H}}^t$, i.e., the Hamiltonian level $(\tilde{H},\tilde{e})$ is Anosov.
\end{proof}

\end{subsection}


\begin{subsection}{Structural stability}\label{strucstab}

A flow $X^t\colon M\rightarrow M$ is a \textit{time change} of another flow $Y^t\colon M\rightarrow M$ if for each $x\in M$ the orbits $\left\{X^t(x)\right\}_{t\in\mathbb{R}}$ and $\left\{Y^t(x)\right\}_{t\in\mathbb{R}}$ coincide and the orientations given by the change of $t$ in the positive direction are the same. It means that $X^t(x)=Y^{\alpha(t,x)}(x)$ for every $x\in M$, where $\alpha$ is a real-valued  function such that $\alpha(0,x)=0$ and $\alpha(\cdot,x)$ is increasing.

Two flows $X^t\colon M_1\rightarrow M_1$ and $Y^t\colon M_2\rightarrow M_2$ are said to be \textit{orbit equivalent} if there is a homeomorphism $h\colon M_1\rightarrow M_2$ such that the flow $h^{-1}\circ\ Y^t\circ h$ is a time change of the flow $X^t$. 

We say that a Hamiltonian level $(H,e)\in C^2(M, \mathbb{R})\times H(M)$ is \textit{structurally stable} if there is a $C^2$-neighbourhood $\mathcal{U}$ of $H$ and $\delta>0$ such that, for every $\tilde{H}$ in $\mathcal{U}$ and every $\tilde{e}\in \left(e-\delta, e+\delta\right)$, the flows $X_H^t|_{\mathcal{E}_{H,e}}$ and $X_{\tilde{H}}^t|_{\mathcal{E}_{\tilde{H},\tilde{e}}}$ are orbit equivalent, where ${\mathcal{E}_{H,e}}$ is a fixed energy surface and ${\mathcal{E}_{\tilde{H},\tilde{e}}}$ is the analytic continuation of it.

If, in addition, the homeomorphism in question can be chosen close enough to the identity for small perturbations, then we say that $(H,e)$ is \textit{strongly structurally stable}.

\begin{theorem} (\cite[Theorem 18.2.3]{Katok})\label{thkatok}
Let $\Lambda \subset M$ be a hyperbolic set of the smooth flow $X^t$ on $M$. Then for any open neighbourhood $\mathcal{V}$ of $\Lambda$ and every $\delta>0$ there exists $\epsilon>0$ such that if $Y^t$ is another smooth flow and $\|X-Y\|_{C^1}<\epsilon$ then there is an invariant hyperbolic set $\Lambda'$ for $Y^t$ and a homeomorphism $h:\Lambda \rightarrow \Lambda'$ with $dist(Id,h)+dist(Id,h^{-1})<\delta$ that is smooth along the orbits of $X^t$ and maps orbits of $X^t$ to orbits of $Y^t$, and establishes an orbit equivalence of $X^t$ and $Y^t$. Furthermore, if $h_1$ and $h_2$ are two such homeomorphisms then $h_2^{-1}\circ h_1$ is a time change of $X^t$, close to the identity.
\end{theorem}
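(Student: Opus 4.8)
The plan is to obtain this classical statement from the \emph{shadowing lemma} for hyperbolic flows, which is the standard analytic engine behind structural stability of hyperbolic sets. The first step is to record the persistence of hyperbolicity. Extending the splitting $T_\Lambda M = E^-\oplus E\oplus E^+$ continuously to the neighbourhood $\mathcal{V}$ and running an invariant cone-field argument in the spirit of Proposition~\ref{propcone}, one sees that for $\|X-Y\|_{C^1}$ small enough every $Y^t$-invariant compact subset of $\mathcal{V}$ is hyperbolic for $Y^t$, with hyperbolicity constants uniform in $Y$. This uniform hyperbolicity is exactly what is needed to apply expansivity and shadowing to $Y^t$ on $\mathcal{V}$, and the set $\Lambda'$ we eventually produce (the image $h(\Lambda)$) will be a compact $Y^t$-invariant subset of $\mathcal{V}$, hence hyperbolic.

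The heart of the construction is the definition of $h$. Fix $x\in\Lambda$. Since $Y$ is $C^1$-close to $X$, the genuine $X^t$-orbit of $x$ is an $\eta$-pseudo-orbit for $Y^t$ (with $\eta\to 0$ as $\epsilon\to 0$), so by the shadowing lemma applied to the hyperbolic flow $Y^t$ there is a $Y^t$-orbit that $\beta$-shadows it, unique up to a time reparametrization once $\beta$ lies below an expansivity constant of $Y^t$. I would define $h(x)$ to be the point of this orbit synchronised with $x$, namely the unique nearby point whose displacement from $x$ lies in $E^-_x\oplus E^+_x$ (transverse to the flow), and build the reparametrization $\alpha(t,x)$ simultaneously. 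Running the symmetric argument, shadowing each $Y^t$-orbit in $\mathcal{V}$ by an $X^t$-orbit, produces an inverse; uniqueness in shadowing forces the two maps to be mutually inverse, so $h\colon\Lambda\to\Lambda'$ is a homeomorphism. By construction $h$ sends $X^t$-orbits onto $Y^t$-orbits and is smooth along them, and since the shadowing distance $\beta$ and the reparametrization can be made as small as we like by shrinking $\epsilon$, one gets $dist(\mathrm{Id},h)+dist(\mathrm{Id},h^{-1})<\delta$, which is the asserted orbit equivalence.

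For the final uniqueness clause, if $h_1$ and $h_2$ are two such homeomorphisms then $h_2^{-1}\circ h_1$ maps each $X^t$-orbit onto itself and preserves orientation, hence is a time change of $X^t$; its closeness to the identity again follows from the tightness of the shadowing.

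The main obstacle I anticipate is the flow (neutral) direction. For a flow one cannot expect a genuine conjugacy commuting with the time-$t$ maps, only an orbit equivalence, and the price is that shadowing must be carried out modulo a time reparametrization $\alpha(t,x)$. Making this reparametrization well defined, continuous in $x$, smooth in $t$, and uniformly close to the identity, while simultaneously pinning down $h(x)$ by the transversality condition that removes the flow-direction ambiguity, is the delicate part. Concretely it amounts to a fixed-point problem on the product of the transverse shadowing variable and the reparametrization: invertibility in the transverse directions comes from hyperbolicity, invertibility in the time direction comes from $X^t$ being non-singular on $\Lambda$ (available here since $\Lambda$ is regular), and the implicit-function/contraction estimate coupling these two must be arranged so that neither degenerates as $\epsilon\to 0$.
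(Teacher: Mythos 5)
The paper gives no proof of this statement, deferring entirely to the citation \cite[Theorem 18.2.3]{Katok}, and your proposal reconstructs essentially the argument found there: uniform persistence of hyperbolicity on a neighbourhood, the shadowing theorem applied to the perturbed flow with the $X^t$-orbits as pseudo-orbits, expansivity modulo time reparametrization to pin down $h$ and its inverse, and a transversal normalization to handle the neutral flow direction. Since your sketch follows the same route as the cited source, including the correct identification of the time-change ambiguity as the delicate point, there is nothing to flag.
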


In the Hamiltonian setting this result means that if a regular energy surface ${\mathcal{E}_{H,e}}$ is hyperbolic for $\Phi_H^t$ then there are a $C^2$-neighbourhood $\mathcal{U}$ of $H$ and $\delta>0$ such that for every $\tilde{H} \in \mathcal{U}$ and $\tilde{e} \in (e-\delta, e+\delta)$ such that ${\mathcal{E}_{\tilde{H},\tilde{e}}}$ is hyperbolic for $\Phi_{\tilde{H}}^t$,  and there exists a homeomorphism $h \colon {\mathcal{E}_{H,e}} \rightarrow {\mathcal{E}_{\tilde{H},\tilde{e}}}$ with the properties described in the theorem above.

In dimension four, we are  able to prove that, for Hamiltonian levels, the notions of structural stability and of Anosov system are equivalent.


So, we have that

\begin{maintheorem}\label{ssA}
If $(H,e)\in C^2(M, \mathbb{R})\times H(M)$ is a structurally stable Hamiltonian level then $(H,e)\in\mathcal{A}^{4}(M)$.
\end{maintheorem}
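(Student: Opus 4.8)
The plan is to deduce Theorem~\ref{ssA} from the main theorem by showing that a structurally stable Hamiltonian level $(H,e)$ is automatically a regular star level, i.e. $(H,e)\in\mathcal{G}^{2}(M)$ with $\mathrm{Crit}(H|_{\mathcal{E}^\star_{H,e}})=\emptyset$; Theorem~\ref{mainth} then yields at once that $X_H^t|_{\mathcal{E}^\star_{H,e}}$ is Anosov. First I would record that structural stability is a robust property: if $(H,e)$ is structurally stable with associated neighbourhood $\mathcal{U}$ and $\delta>0$, then for any $(\tilde H,\tilde e)$ with $\tilde H$ in a slightly smaller neighbourhood every further small perturbation of $\tilde H$ still lies in $\mathcal{U}$, hence is orbit equivalent to $(H,e)$ and therefore to $(\tilde H,\tilde e)$; thus $(\tilde H,\tilde e)$ is itself structurally stable. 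This reduction means it suffices to exclude non-hyperbolic closed orbits and critical points for $(H,e)$ alone, since the same argument then applies verbatim to every nearby level, producing the $C^2$-robust hyperbolicity demanded by Definition~\ref{hss}.

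To obtain regularity I would argue by contradiction using the fact that an orbit equivalence carries fixed points (singular orbits) bijectively to fixed points. If $\mathcal{E}_{H,e}$ contained a critical point $q$, I would add a localized bump $\phi$ supported in a small ball around $q$ with $d\phi(q)=0$ and $\phi(q)\neq 0$ small; then $\tilde H=H+\phi$ is $C^2$-close to $H$, still has $q$ as a critical point since $d\tilde H(q)=0$, but now $\tilde H(q)=e+\phi(q)\neq e$, so the level $\mathcal{E}_{\tilde H,e}$ carries no singularity near $q$. Since $\mathcal{E}_{H,e}$ does have the fixed point $q$, the flows $X_H^t|_{\mathcal{E}_{H,e}}$ and $X_{\tilde H}^t|_{\mathcal{E}_{\tilde H,e}}$ have different numbers of singularities and cannot be orbit equivalent, contradicting structural stability. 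Hence $\mathcal{E}_{H,e}$ is regular.

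For the closed orbits I would invoke the Symplectic Eigenvalue Theorem (Theorem~\ref{sympeigen}): the transversal multipliers of a closed orbit $\Gamma$ are $\{\lambda,\lambda^{-1}\}$, so $\Gamma$ is hyperbolic, parabolic, or elliptic. In the parabolic case I would use Vivier's lemma (Lemma~\ref{Frank}) to realize two $C^2$-small perturbations of $H$ whose transversal linear Poincar\'e flow along $\Gamma$ has, respectively, real multipliers off the unit circle and simple non-real multipliers on it; the continuation of $\Gamma$ is then hyperbolic for the first and elliptic (with Diophantine rotation number, hence Lyapunov stable by KAM) for the second. Lyapunov stability of a closed orbit is preserved by orbit equivalence, so these two perturbations are not orbit equivalent to each other, although both are orbit equivalent to $(H,e)$; this excludes parabolic orbits.

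The elliptic case is the one I expect to be the main obstacle, precisely because an elliptic orbit is $C^1$-robustly elliptic at the linear level and cannot be made hyperbolic by a small perturbation. Here the idea is to exploit the surrounding conservative dynamics rather than the linear part: using Lemma~\ref{Frank} I would tune the transversal rotation number of $\Gamma$ to two distinct rational values $p_1/q_1\neq p_2/q_2$; at a rational rotation number $p/q$ the $q$-th transversal return map is resonant, and an arbitrarily small further perturbation produces a Birkhoff ring of closed orbits of period close to $q\pi$ near $\Gamma$, whose number is governed by $q$. Taking $q_1\neq q_2$ then gives two perturbations of $H$ carrying different numbers of closed orbits in a fixed neighbourhood of $\Gamma$, which contradicts the invariance of the cardinality of the closed orbit set under orbit equivalence. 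Equivalently, this step shows that a structurally stable level is $C^2$-far from elliptic orbits, so that Remark~\ref{remaelliptic} together with Lemma~\ref{lemanosov} would already deliver the Anosov property. Combining the three steps, $(H,e)$ is a regular level all of whose closed orbits and critical points are $C^2$-robustly hyperbolic, i.e. $(H,e)\in\mathcal{G}^{2}(M)$ with $\mathrm{Crit}(H|_{\mathcal{E}^\star_{H,e}})=\emptyset$, and Theorem~\ref{mainth} gives $(H,e)\in\mathcal{A}^{4}(M)$.
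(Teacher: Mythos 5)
Your overall architecture (prove that a structurally stable level is a regular star level and then quote Theorem~\ref{mainth}, or equivalently show it is $C^2$-far from elliptic orbits and use Remark~\ref{remaelliptic} with Lemma~\ref{lemanosov}) is coherent, the robustness and critical-point steps are fine, and the parabolic/elliptic trichotomy via Theorem~\ref{sympeigen} is exactly right. But the elliptic-exclusion step, which you yourself identify as the crux, contains a genuine gap. The invariant you propose --- the cardinality of the set of closed orbits in a fixed neighbourhood of $\Gamma$ after tuning the rotation number to $p_1/q_1$ versus $p_2/q_2$ --- does not distinguish the two perturbed systems: a resonant perturbation of an elliptic orbit of a smooth Hamiltonian generically produces, by Poincar\'e--Birkhoff together with satellite elliptic islands, \emph{infinitely many} closed orbits near $\Gamma$ in both cases, so both cardinalities are countably infinite. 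You cannot repair this by counting only orbits of period close to $q_i\pi$, because an orbit equivalence is only a time change and does not preserve periods. A secondary weakness affects your parabolic step: Lyapunov stability of a closed orbit is indeed preserved by orbit equivalence, but the paper's definition of structural stability does not require $h$ to be near the identity (that is reserved for \emph{strong} structural stability), so the equivalence may send your KAM-stable elliptic continuation to some remote Lyapunov stable orbit of the hyperbolically-perturbed system; you would need either near-identity equivalences or an argument that the latter system has no Lyapunov stable closed orbit at all. (Also, a Diophantine linear rotation number alone does not give stability: you must additionally arrange a twist/nondegeneracy condition by a further perturbation, and work with a smooth representative of the perturbed Hamiltonian.)

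It is instructive to compare with the paper's proof, which is structured precisely to sidestep the robust ellipticity problem you run into. The paper argues by contradiction: if $(H,e)$ is not Anosov, then by Lemma~\ref{lemanosov} no energy surface in the level admits a dominated splitting, and Remark~\ref{remaelliptic} (i.e.\ the mechanism in the proof of Lemma~\ref{lemasing}) produces nearby levels $(\tilde H,\tilde e)$ carrying elliptic closed orbits of \emph{arbitrarily large period} $\tilde\pi$. The large period is the whole point: writing the total rotation as $\theta=\tilde\pi\alpha$ with $0<\alpha<\delta$, one applies Lemma~\ref{Frank} once on each time-one segment of the orbit, composing with the small rotation $R_{-\alpha}$, so that $\Phi_{\bar H}^{\tilde\pi}(p)=\mathrm{Id}$; the orbit becomes parabolic, and parabolic orbits are unstable under perturbation, contradicting structural stability (the paper cites Robinson for the companion fact that elliptic orbits obstruct it). In your direct approach the elliptic orbit of $H$ may have small period, in which case no $C^2$-small perturbation can change its elliptic type and no rotation trick is available, so your step 4 must carry the entire weight --- and as written it fails. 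The cleanest repairs are either to invoke Robinson's theorem (\cite[Theorem 6.4]{Robinson}) at that point, or to abandon the forward route and run the paper's contrapositive, where the long period of the elliptic orbits is supplied for free by the failure of domination.
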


\begin{proof}
Fix a structurally stable Hamiltonian level $(H,e)$ and take $\mathcal{U}$ a neighbourhood of $H$ and $\delta>0$ such that, for every $\tilde{H}\in\mathcal{U}$ and for every $\tilde{e}\in(e-\delta,e+\delta)$, we have that $(\tilde{H},\tilde{e})$ topologically equivalent to $(H,e)$. In particular one has  that $\tilde{H}\in\mathcal{U}$ is regular because  critical points can be easily destroyed by small perturbations of the energy.

By contradiction, suppose that $(H,e)$ is not an Anosov system therefore, using Lemma~\ref{lemanosov}, none of the energy surfaces associated to $e$, ${\mathcal{E}_{H,e}}$, admit a dominated splitting. From Remark~\ref{remaelliptic} one gets that $(H,e)$ can be approximated by  $(\tilde{H},\tilde{e})$ such that $\mathcal{E}_{\tilde{H},\tilde{e}}$ has an elliptic closed orbit; moreover, it follows from the proof of Lemma~\ref{lemasing} that this orbit can be chosen with period arbitrarily large.

Now, applying Lemma \ref{Frank} several times, by concatenating small rotations, in order to get a new Hamiltonian level $(\bar{H},\bar{e})$, close to $(H,e)$, and exhibiting a parabolic closed orbit. Let us know formalize this argument.

Let us assume that $p$ is an elliptic closed orbit of (arbitrarily large integer) period $\tilde{\pi}$ and $\theta\in[0,\pi/2]$ is such that $\rho=\exp(\theta i)$ is one eigenvalue of $\Phi_H^{\tilde{\pi}}(p)$. Fix $\epsilon>0$ and $\tau>0$ and let $\delta>0$ be given by Lemma \ref{Frank}. We assume that the period is such that $\tilde{\pi}=\dfrac{\theta}{\alpha}$, where $0<\alpha<\delta$.

Recall that the special linear group $SL(2,\mathbb{R})$ is the group of all real $2\times 2$ matrices with determinant of modulus equal to $1$ and notice that, once we are in the two-dimensional case, the symplectic setting is nothing more than the conservative one. So, let $R_\alpha$ be the rotation matrix of angle $\alpha$, where $\alpha$ is chosen such that $R_\alpha$ is $C^0$-close to the identity. We observe that $\Phi_H^{\tilde{\pi}}(p)$ can be seen as $R_{\theta}$.

By Lemma \ref{Frank}, for $i=1,...,\tilde{\pi}$, for any flowbox $V_i$ of an injective arc of orbit $X_{H}^{[i-1,i]}(p)$ and for a transversal symplectic $\delta$-perturbation $F_i$ of $\Phi_{{H}}^1(X_H^{i-1}(p))$, there exists $H_i\in C^2(M,\mathbb{R})$ satisfying:
\begin{itemize}
	\item $H_i$ is $C^2$-close to $H$,
	\item $\Phi_{{H_i}}^1(X^{i-1}_H(p))=F_i$,
	\item $H=H_i$ on $X_{H}^{[0,1]}(X^i_H(p))\cup (M\backslash V_i)$.
\end{itemize}
Take $$F_i:= \Phi_H^{i}(p)\circ R_{-\alpha}\circ\bigl[\Phi_H^{i-1}\bigl( p\bigr)\bigr]^{-1}$$ and note that $F_i$ is symplectic, since $det\; F_i=1$. 
We define $\tilde{H}=H$, in $M\setminus \bigcup_{i=1}^{\tilde{\pi}}V_i$, and $\tilde{H}=H_i$ in $V_i$, for $i \in\{1,...,\tilde{\pi}\}$.

Now, observe that

\begin{eqnarray*}
\Phi_{{\tilde{H}}}^{\tilde{\pi}}(p)&=&F_{\tilde{\pi}}\circ F_{\tilde{\pi}-1}\circ\cdot\cdot\cdot\circ F_2\circ F_1\nonumber
=  \Phi_{\tilde{H}}^{\tilde{\pi}}(p)\circ R_{-\tilde{\pi}\alpha}\\
&=&\Phi_{\tilde{H}}^{\tilde{\pi}}(p)\circ R_{-\theta}=Id.\nonumber
\end{eqnarray*}

This is a contradiction, since the presence of a parabolic orbit prevents structural stability. So, we have proved that the Hamiltonian level $(H,e)$ has to be Anosov. 
\end{proof}

Notice that Robinson (\cite[Theorem 6.4]{Robinson}), whilst using different techniques, also proved that the existence of an elliptic periodic point prevents structural stability.

Next result states that Anosov Hamiltonian levels are strongly structurally stable.

\begin{maintheorem}\label{Ass}
If $(H,e)\in\mathcal{A}^{2d}(M)$, $d\geq 2$, then $(H,e)$ is strongly structurally stable.
\end{maintheorem}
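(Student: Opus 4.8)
The plan is to invoke the continuity and shadowing machinery for hyperbolic sets, already made available in the excerpt through Theorem~\ref{thkatok} (the structural stability theorem of Katok--Hasselblatt), and to translate its conclusion into the Hamiltonian language using Lemma~\ref{hyperbolicpoinc}. Since $(H,e)\in\mathcal{A}^{2d}(M)$, there is a regular energy surface $\mathcal{E}_{H,e}$ which is hyperbolic for $X_H^t$, and by Lemma~\ref{hyperbolicpoinc} it is equivalently hyperbolic for the transversal linear Poincar\'{e} flow $\Phi_H^t$. The openness theorem (Theorem~\ref{Aopen}) already guarantees that, for $\tilde{H}$ in a $C^2$-neighbourhood $\mathcal{U}$ of $H$ and $\tilde{e}$ in an interval $(e-\delta,e+\delta)$, the analytic continuation $\mathcal{E}_{\tilde{H},\tilde{e}}$ is again a regular hyperbolic energy surface. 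The point of the present theorem is to upgrade this openness to a \emph{strong structural stability} statement, namely to produce an orbit equivalence $h\colon\mathcal{E}_{H,e}\rightarrow\mathcal{E}_{\tilde{H},\tilde{e}}$ that can be taken arbitrarily close to the identity as the perturbation shrinks.

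The key steps, in order, would be as follows. First I would fix the hyperbolic energy surface $\Lambda=\mathcal{E}_{H,e}$, which is compact and $X_H^t$-invariant, and an open neighbourhood $\mathcal{V}$ of $\Lambda$ together with a target closeness $\delta'>0$ for the homeomorphism. Then I would apply Theorem~\ref{thkatok} to the flow $X_H^t$ restricted to a neighbourhood of $\Lambda$: it yields $\epsilon>0$ such that any smooth flow $Y^t$ with $\|X_H-Y\|_{C^1}<\epsilon$ admits an invariant hyperbolic set $\Lambda'$ and a homeomorphism $h\colon\Lambda\rightarrow\Lambda'$ realizing an orbit equivalence, smooth along orbits, with $dist(\mathrm{Id},h)+dist(\mathrm{Id},h^{-1})<\delta'$. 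Next I would translate the $C^1$-closeness of vector fields into the $C^2$-closeness of Hamiltonians: recall from Section~\ref{notation} that $H$ being $C^2$-close to $\tilde{H}$ makes $X_H$ and $X_{\tilde{H}}$ $C^1$-close, so there is a $C^2$-neighbourhood $\mathcal{U}$ of $H$ and a $\delta>0$ (shrinking the one from Theorem~\ref{Aopen} if necessary) so that the hypothesis of Theorem~\ref{thkatok} is met for every $(\tilde{H},\tilde{e})\in\mathcal{U}\times(e-\delta,e+\delta)$. Finally I would identify the invariant hyperbolic set $\Lambda'$ produced by Theorem~\ref{thkatok} with the analytic continuation $\mathcal{E}_{\tilde{H},\tilde{e}}$ supplied by the openness argument, concluding that $h$ is the desired orbit equivalence close to the identity, whence $(H,e)$ is strongly structurally stable.

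The main obstacle I anticipate is the identification step: Theorem~\ref{thkatok} produces \emph{some} invariant hyperbolic set $\Lambda'$ for the perturbed flow, but it does not a priori tell us that $\Lambda'$ is precisely an energy surface of $\tilde{H}$, let alone the correct connected component $\mathcal{E}_{\tilde{H},\tilde{e}}$. To pin this down I would argue that $h$ maps $\Lambda$ into a small neighbourhood $\mathcal{V}$ of $\Lambda$, and that by Theorem~\ref{Aopen} the only hyperbolic invariant piece of $\tilde{H}^{-1}(\{\tilde{e}\})$ lying in $\mathcal{V}$ is exactly $\mathcal{E}_{\tilde{H},\tilde{e}}$; since the perturbed Hamiltonian flow is everywhere tangent to its own energy levels, the shadowing orbits produced by the theorem must lie on $\mathcal{E}_{\tilde{H},\tilde{e}}$ for the matching energy value, forcing $\Lambda'=\mathcal{E}_{\tilde{H},\tilde{e}}$. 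A secondary, more routine point is ensuring that the homeomorphism respects the $2d$-dimensional symplectic structure only insofar as orbit equivalence requires, which it does automatically since Theorem~\ref{thkatok} is stated for general smooth flows and the hyperbolic splitting of $\Phi_{\tilde{H}}^t$ is exactly the one whose continuity underlies the cone-field estimates of Proposition~\ref{propcone}. With the identification secured, the closeness-to-identity clause of Theorem~\ref{thkatok} gives the strong structural stability directly.
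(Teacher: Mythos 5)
Your skeleton coincides with the paper's: apply Theorem~\ref{thkatok} to the hyperbolic surface $\mathcal{E}_{H,e}$ to obtain an invariant hyperbolic set $\tilde{\Lambda}$ for $X_{\tilde{H}}^t$ and a near-identity orbit equivalence $h\colon \mathcal{E}_{H,e}\rightarrow\tilde{\Lambda}$, then identify $\tilde{\Lambda}$ with the analytic continuation $\mathcal{E}_{\tilde{H},\tilde{e}}$. You also correctly single out this identification as the crux. But your resolution of it has a genuine gap, at exactly the two points where the paper's proof does its real work. First, to conclude that $\tilde{\Lambda}$ lies in a \emph{single} energy level of $\tilde{H}$ it is not enough that the perturbed flow is tangent to its energy levels: each individual orbit of $\tilde{\Lambda}$ lies in some level, but a priori different orbits could lie in levels $\tilde{H}^{-1}(\{e'\})$ with $e'$ varying over $\tilde{\Lambda}$, and your phrase ``for the matching energy value'' never explains why a single value matches all orbits. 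The paper closes this by invoking Anosov's ergodicity theorem: $X_H^t|_{\mathcal{E}_{H,e}}$ is hyperbolic and $\mu_{\mathcal{E}_{H,e}}$-preserving, hence ergodic, hence has a dense orbit; $h$ transports it to a dense orbit of $\tilde{\Lambda}$, on which $\tilde{H}$ is constant, and continuity then makes $\tilde{H}$ constant on all of $\tilde{\Lambda}$, giving $\tilde{\Lambda}\subset\mathcal{E}_{\tilde{H},\tilde{e}}$ for one $\tilde{e}$.

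Second, your claim that ``by Theorem~\ref{Aopen} the only hyperbolic invariant piece of $\tilde{H}^{-1}(\{\tilde{e}\})$ lying in the chosen neighbourhood is exactly $\mathcal{E}_{\tilde{H},\tilde{e}}$'' is false: any closed invariant subset of a hyperbolic set --- for instance a single hyperbolic closed orbit inside $\mathcal{E}_{\tilde{H},\tilde{e}}$ --- is itself a compact invariant hyperbolic set in that neighbourhood, so openness of $\mathcal{A}^{2d}(M)$ alone cannot force $\tilde{\Lambda}=\mathcal{E}_{\tilde{H},\tilde{e}}$; you must separately rule out that $\tilde{\Lambda}$ is a proper subset. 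The paper does this with a second application of Anosov's theorem on the perturbed level (which is Anosov by Theorem~\ref{Aopen}), so that ergodicity gives $\mu_{\mathcal{E}_{\tilde{H},\tilde{e}}}(\tilde{\Lambda})\in\{0,1\}$; the measure-zero case is excluded by Szpilrajn's theorem (Theorem~\ref{szpil}), since it would force $\dim(\tilde{\Lambda})<2d-1$ while $h$ is a homeomorphism from the $(2d-1)$-dimensional manifold $\mathcal{E}_{H,e}$ and topological dimension is a topological invariant; the full-measure case yields $\tilde{\Lambda}=\mathcal{E}_{\tilde{H},\tilde{e}}$ by compactness. Some such measure-theoretic or topological input (an invariance-of-domain argument could serve instead) is indispensable, and your proposal supplies none; as written, the identification step, and with it the proof, does not go through.
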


\begin{proof}
Take $(H,e)\in\mathcal{A}^{2d}(M)$, i.e., $\mathcal{E}_{H,e}$ is hyperbolic, so regular, for $X^t_H|_{\mathcal{E}_{H,e}}$. As we have seen, for any small neighbourhood $\mathcal{U}$ of $\mathcal{E}_{H,e}$ there is a $C^2$-neighbourhood $\mathcal{V}$ of $H$ and $\delta>0$ such that for every $\tilde{H}\in \mathcal{V}$ and for every $\tilde{e}\in (e-\delta,e+\delta)$ we have that $\tilde{H}^{-1}(\left\{\tilde{e}\right\})$ has exactly one connected component in $\mathcal{U}$, say $\mathcal{E}_{\tilde{H},\tilde{e}}$, that is obviously $X_{\tilde{H}}^t$-invariant. So, taking $\mathcal{U}$ sufficiently small, for every $\tilde{\delta}>0$ one gets $\tilde{\epsilon}>0$ such that $\|H-\tilde{H}\|_{C^2}<\tilde{\epsilon}$. Then, by Theorem \ref{thkatok}, there is a compact $X_{\tilde{H}}^t$-invariant hyperbolic set $\tilde{\Lambda}$ for ${X^t_{\tilde{H}}}|_{\tilde{\Lambda}}$ on $\mathcal{U}$ and a homeomorphism $h:\mathcal{E}_{H,e}\rightarrow \tilde{\Lambda}$, with $dist(Id,h)+dist(Id,h^{-1})<\tilde{\delta}$, that maps orbits of $X_H^t$ to orbits of $X_{\tilde{H}}^t$, preserving its orientation.

We have that $X_H^t|_{\mathcal{E}_{H,e}}$ is hyperbolic and $\mu_{\mathcal{E}_{H,e}}$-conservative. So, by the  \textit{Anosov theorem} (see \cite{Anosov}) $\mu_{\mathcal{E}_{H,e}}$ is ergodic. Now, due to the compactness of $M$, we can conclude that $\mathcal{E}_{H,e}$ has a dense orbit and so, since $h$ takes orbits into orbits, we also have a dense orbit in $\tilde{\Lambda}$. Hence, densely, we have that the image by $H$ of the points in $\tilde{\Lambda}$ is constant. Now, extending to the closure, we can find $\tilde{e}\in (e-\delta,e+\delta)$ such that $\tilde{\Lambda}\subset\mathcal{E}_{\tilde{H},\tilde{e}}$. 
Now, by openness, we have that $\mathcal{E}_{\tilde{H},\tilde{e}}$ is still Anosov. Therefore, using \textit{Anosov's theorem} again, we have that $\mu_{\mathcal{E}_{\tilde{H},\tilde{e}}}$ is ergodic. So, since $\tilde{\Lambda}\subset\mathcal{E}_{\tilde{H},\tilde{e}}$ is compact and $X_{\tilde{H}}^t$-invariant, we must have $\mu_{\mathcal{E}_{\tilde{H},\tilde{e}}}(\tilde{\Lambda})=1$ or $\mu_{\mathcal{E}_{\tilde{H},\tilde{e}}}(\tilde{\Lambda})=0$. If the first case holds, by compactness, we have that $\tilde{\Lambda}=\mathcal{E}_{\tilde{H},\tilde{e}}$. On the other hand, supposing that $\mu_{\mathcal{E}_{\tilde{H},\tilde{e}}}(\tilde{\Lambda})=0$, by Theorem \ref{szpil}, we must have that $\dim(\mathcal{E}_{\tilde{H},\tilde{e}})<2d-1$. However, $\dim(\mathcal{E}_{H,e})=2d-1$ and so, since $h$ preserves the topological dimension, we reach a contradiction. This means that $(H,e)$ is strongly structurally stable, and so structurally stable.
\end{proof}

\end{subsection}

\end{section}

\section*{Acknowledgements}

M\'ario Bessa was partially supported by Funda\c c\~ao para a Ci\^encia e a Tecnologia, SFRH/BPD/
20890/2004. C\'elia Ferreira was supported by Funda\c c\~ao para a Ci\^encia e a Tecnologia, SFRH/BD/
33100/2007.




\end{document}